\documentclass[10pt,a4paper]{article}  

\usepackage[draft]{optional}
\usepackage[british,american]{babel}
\usepackage{bbm}
\usepackage{mathrsfs}

\usepackage{amsfonts, amsmath, wasysym}


\usepackage{amssymb,amsthm,
paralist
}

\usepackage{
latexsym,
nicefrac,
}



\usepackage[usenames]{color}

\usepackage{url}

\definecolor{darkgreen}{rgb}{0,0.5,0}
\definecolor{darkred}{rgb}{0.7,0,0}
\usepackage[colorlinks, 
citecolor=darkgreen, linkcolor=darkred
]{hyperref}
\usepackage{esint}
\usepackage{bibgerm}
\usepackage[normalem]{ulem}

\bibliographystyle{geralpha}

\textwidth=145mm   
\textheight=228mm
\topmargin=-0.4in
\oddsidemargin=+0.2in
\evensidemargin=+0.2in


\parindent=0pt
\parskip=10pt

\theoremstyle{plain}
\newtheorem{lemma}{Lemma}[section]
\newtheorem{thm}[lemma]{Theorem}
\newtheorem{prop}[lemma]{Proposition}

\theoremstyle{definition}

\newtheorem{rmk}[lemma]{Remark}

\setlength{\unitlength}{1mm}      


\numberwithin{equation}{section}

\newcommand{\m}{\mathcal{M}}

\newcommand{\ck}{\mathcal{K}}
\newcommand{\ci}{\mathcal{I}}
\newcommand{\cl}{\mathcal{L}}

\newcommand{\cu}{\mathcal{U}}



\newcommand{\pl}[2]{{\frac{\partial #1}{\partial #2}}}


\newcommand{\ga}{\gamma}

\newcommand{\de}{\delta}
\newcommand{\om}{\omega}
\newcommand{\Om}{\Omega}

\newcommand{\la}{\lambda}
\newcommand{\La}{\Lambda}

\newcommand{\si}{\sigma}

\newcommand{\Si}{\Sigma}
\renewcommand{\th}{\theta}

\newcommand{\vph}{\varphi}

\newcommand{\ep}{\varepsilon}

\newcommand{\R}{\ensuremath{{\mathbb R}}}
\newcommand{\N}{\ensuremath{{\mathbb N}}}





\newcommand{\downto}{\downarrow}
\newcommand{\upto}{\uparrow}

\newcommand{\lap}{\Delta}

\newcommand{\grad}{\nabla}



\DeclareMathOperator{\inj}{inj}

\newcommand{\norm}[1]{\Vert#1\Vert}  


\def\osc{\mathop{{\mathrm{osc}}}\limits}

\newcommand{\beq}{\begin{equation}}
\newcommand{\eeq}{\end{equation}}
\newcommand{\beqs}{\begin{equation}}
\newcommand{\eeqs}{\end{equation}}
\newcommand{\beqa}{\begin{equation}\begin{aligned}}
\newcommand{\eeqa}{\end{aligned}\end{equation}}
\newcommand{\beqas}{\begin{equation}\begin{aligned}}
\newcommand{\eeqas}{\end{aligned}\end{equation}}
\newcommand{\brmk}{\begin{rmk}}
\newcommand{\ermk}{\end{rmk}}
\newcommand{\partref}[1]{\hbox{(\csname @roman\endcsname{\ref{#1}})}}
\newcommand{\half}{\frac{1}{2}}



\newcommand*\tr{\mathop{\mathrm{tr}}\nolimits}

\newcommand*\supp{\mathop{\mathrm{supp}}\nolimits}
\newcommand*\dist{\mathop{\mathrm{dist}}\nolimits}

\newcommand*\arsinh{\mathop{\mathrm{arsinh}}\nolimits}

\newcommand*\ddt{\frac{d}{dt}}

\newcommand{\pt}{\partial_t}

\newcommand{\abs}[1]{\vert#1\vert} 
 
\newcommand{\eps}{\varepsilon}
\newcommand{\na}{\nabla}

\newcommand{\Hol}{{\mathcal{H}}} 


\newcommand{\Col}{\mathcal{C}}

\newcommand{\thin}{\text{-thin}}
\newcommand{\thick}{\text{-thick}}

\newcommand{\halb}{\frac12}

\newcommand{\lan}{\langle}
\newcommand{\ran}{\rangle}
\title{{\sc
Global weak solutions of the Teichm\"uller harmonic map flow
into general targets
}
\\ 
}
\author{ Melanie Rupflin and Peter M. Topping}
\date{\today}


\begin{document}
\maketitle

\begin{abstract}
We analyse finite-time singularities of the Teichm\"uller harmonic map flow -- a natural gradient flow of the harmonic map energy -- and find a canonical way of flowing beyond them in order to construct global solutions in full generality. Moreover, we prove a no-loss-of-topology result at finite time, which completes the proof that this flow decomposes an arbitrary map into a collection of branched minimal immersions connected by curves.
\end{abstract}

\section{Introduction}
The Teichm\"uller harmonic map flow is a gradient flow of the harmonic map energy that evolves a given map $u_0:M\to N$ from a closed oriented surface $M$ of arbitrary genus $\ga\geq 0$ into a closed target manifold $N$ of arbitrary dimension, and simultaneously evolves the domain metric on $M$ within the class of constant curvature metrics. It tries to evolve $u_0$ to a branched minimal immersion -- a critical point of the energy functional in this situation -- but in general there is no such immersion homotopic to $u_0$, so something more complicated must occur.

The development of the theory so far has suggested that the flow should instead decompose $u_0$ into a \emph{collection} of branched minimal immersions from lower genus surfaces. This paper provides the remaining part of the jigsaw in order to prove this in full generality, by analysing the finite-time singularities that may occur, finding a canonical way of flowing beyond them, and analysing their fine structure in order to prove that no topology is lost except by the creation of additional branched minimal immersions and connecting curves. The resulting global generalised solution  will have at most finitely many singular times, together, possibly, with singular behaviour at infinite time that was analysed in \cite{RT,RTZ,HRT}.

Consider the harmonic map energy
$$E(u,g)=\halb\int_M\abs{du}_g^2 \,dv_g$$
acting on a sufficiently regular map $u:M\to (N,g_N)$, and a metric $g$
in the space $\m_c$ of constant (Gauss-)curvature $-1$, $0$ or $1$ 
(depending on the genus) 
metrics on $M$ 
with fixed unit area in the case that the curvature is $0$.
Critical points are weakly conformal harmonic maps $u:(M,g)\to (N,g_N)$, which are then branched minimal immersions \cite{GOR} (allowing constant maps in addition). The gradient flow, introduced in \cite{RT}, can be written with respect to a fixed parameter $\eta>0$ as
\begin{equation}
\label{flow}
\pl{u}{t}=\tau_g(u);\qquad \pl{g}{t}=\frac{\eta^2}{4} Re(P_g(\Phi(u,g))),
\end{equation}
where $\tau_g(u)=\tr_g (\nabla_g du)$ denotes the tension field of $u$, $P_g$ represents the $L^2$-orthogonal projection from the space of quadratic differentials on $(M,g)$ onto the space $\Hol(M,g)$ of \emph{holomorphic} quadratic differentials, and
$\Phi(u,g)$ is the Hopf differential.
The flow decreases the energy $E(t):=E(u(t),g(t))$ according to
\beqa
\label{energy-identity}
\frac{dE}{dt}&=-\int_M 
\left[|\tau_g(u)|^2+\left(\frac{\eta}{4}\right)^2 |Re(P_g(\Phi(u,g)))|^2\right]\\
&=-\|\pt u\|_{L^2}^2-\frac{1}{\eta^2}\|\pt g\|_{L^2}^2\\
&=-\|\tau_g(u)\|_{L^2}^2-\frac{\eta^2}{32}\|P_g(\Phi(u,g))\|_{L^2}^2,
\eeqa
where we use that 
$\|P_g(\Phi(u,g))\|_{L^2}^2=2\|Re(P_g(\Phi(u,g))\|_{L^2}^2$.
We refer to \cite{RT} for further details.

When the genus $\ga$ of $M$ is zero, then there are no nonvanishing holomorphic quadratic differentials, so $g$ remains fixed, and we recover the harmonic map flow \cite{ES}, which has been studied in  detail for two-dimensional domains, cf. \cite{Struwe85}, \cite[Theorem 1.6]{finwind} and the references therein. 
In the case that $\ga=1$, this flow can be shown to be equivalent to a flow of Ding-Li-Liu \cite{Ding-Li-Liu}, as pointed out in \cite{RT}, and analysed in \cite{Ding-Li-Liu} and \cite{HRT}.

\subsection{Construction of a global flow}


In both cases $\ga=0$ and $\ga=1$, one obtains global weak solutions
starting with any initial  map 
$u_0\in H^1(M,N)$ and any initial metric $g_0\in \m_c$ \cite{Struwe85, Ding-Li-Liu}. 
For $\ga\geq 2$ it was shown in \cite{Rexistence} that a weak solution 
exists on a time interval $[0,T)$, for some $T\in (0,\infty]$, and if $T<\infty$ then the domain must degenerate in the sense that the injectivity radius of $(M,g)$ must approach zero as $t\upto T$.
In all these cases the flow will be smooth away from finitely many times
and as time increases to a singular time the 
map $u$ splits off one or more (but finitely many) nonconstant harmonic 2-spheres, which will then automatically be branched minimal spheres (see e.g. \cite[(10.6)]{EL1} for this latter fact) as bubbling occurs. At each such singular time $\tau$, the 
continuation of this weak solution is constructed by taking a (unique) limit $(u(\tau),g(\tau))\in H^1(M,N)\times  \m_c$ as $t\upto \tau$ 
and continuing the flow past the singular time by restarting with $(u(\tau),g(\tau))$ as new initial data. 
This process gives a unique flow within the class of weak solutions with non-increasing energy.
It was shown in \cite{DT} and \cite[Theorem 1.6]{finwind} that for the harmonic map flow, and in particular for the case $\ga=0$ above, we have no loss of energy and precise control on the bubble scales at these singular times.
A very similar argument establishes the same properties for all genera 
$\ga$, and the case $\ga\geq 2$ even follows directly from Proposition
\ref{prop:conv-nondeg} below that we need for other reasons.
The upshot of this singularity analysis is that the flow map before a singular time can be reconstructed from the flow map after the singular time together with the branched minimal spheres representing the bubbles.

Whenever a \emph{global} weak solution of \eqref{flow} exists, i.e. when $T=\infty$ for $\ga\geq 2$, and in all cases for $\ga=0,1$, 
then it was shown in \cite{RT, RTZ, HRT} (see also \cite{Ding-Li-Liu, Struwe85}) that either the flow subconverges to a branched minimal immersion, or it subconverges to a \emph{collection} of branched minimal immersions.
This collection may consist partly of bubbles, and it may include a limit branched minimal immersion parametrised over the original domain,
but in general, for $\ga\geq 2$, the domain can split into a collection of lower genus closed surfaces, and the map converges to a branched minimal immersion on some or all of these lower genus surfaces.
The way the domain surface can split into lower genus surfaces 
is described by the classical Deligne-Mumford-type description of how hyperbolic surfaces can degenerate, cf. \cite[Theorem A.4]{RT-horizontal}. In particular, when the domain splits, the length of the shortest closed geodesic in the domain will shrink to zero and so-called collar regions around such shrinking geodesics, described by the Collar Lemma of Keen-Randol, see e.g. \cite[Lemma A.1]{RT-horizontal},  will degenerate.
In all cases, if one is careful to capture all bubbles, including those 
disappearing down any degenerating collars, 
it was shown in \cite{HRT} that \emph{all} energy in the limit is accounted for by branched minimal immersions from closed surfaces.
The upshot of this asymptotic analysis is that when a global weak solution exists, for a domain of arbitrary genus, the map $u(t)$ can be reconstructed from the branched minimal immersions we find, connected together with curves. 
(See \cite{HRT} for precise statements.)

The theory above leaves open the possibility of the flow stopping in finite time in the case $\ga\geq 2$ if it happens that the injectivity radius of the domain converges to zero, i.e. we have collar degeneration as above but in \emph{finite} time.  We showed in \cite{RT3} that the flow exists and is smooth for all time in the case that the target $(N,g_N)$ has nonpositive sectional curvature, mirroring the seminal work of Eells-Sampson \cite{ES} (although the asymptotic behaviour is more elaborate in our situation, with infinite time singularities reflecting the more complicated structure of the space of critical points).
However, in the  case of general targets, the theory above has the major omission that the existence time $T$ for $\ga\geq 2$ could be finite, 
and by such time we cannot expect the flow to have decomposed $u(t)$ into branched minimal immersions.


In this paper we show how the flow can be continued in a canonical fashion when this domain degeneration occurs, with the continuation being a finite collection of new flows. By repeating this process a finite number of times, we arrive at a global solution that is smooth except at finitely many singular times. Moreover, our analysis of the collar degeneration singularity allows us to account for all `lost topology' at the singular time in terms of branched minimal spheres, some of which may be conventional bubbles, together with connecting curves, despite the tension field diverging to infinity in general. 
Combined with the earlier work described above, a consequence is that the flow realises the following:
\begin{quote}
\em
Any smooth map $u_0:M\to (N,g_N)$ is decomposed by the flow \eqref{flow} into a finite collection of branched minimal immersions $v_i:\Si_i\to (N,g_N)$ from closed Riemann surfaces 
$\{\Si_i\}$ of total genus no more than $\gamma$. The original $M$ can be reconstructed from the surfaces $\{\Si_i\}$ by removing a finite collection of pairs of tiny discs in $\coprod_i \Si_i$ and gluing in cylinders.
The map $u_0$ is homotopic to the corresponding combination of the $\{v_i\}$ together with connecting curves on the glued-in cylinders.
\end{quote}

For other situations in which maps are decomposed into collections of minimal objects, see \cite{MSY} and \cite{HS}, for example.

In order to make a continuation of the flow, we require the following basic description of the convergence of the flow as we approach a finite-time singularity. 
This can be applied to a weak solution (including bubbling) by restricting to a short time interval just prior to a time when the injectivity radius drops to zero, thus avoiding the bubbling and allowing us to consider a smooth flow for simplicity.
A far more refined description will be required later in order to ensure that the continuation after the singularity properly reflects the flow just before.

\begin{thm}
\label{thm:basic_convergence}
Let $M$ be any closed oriented surface of genus $\gamma\geq 2$ and let $(N,g_N)$ be any smooth closed Riemannian manifold. 
Let $(u,g)$ be a smooth solution of \eqref{flow} defined on a time interval $[0,T)$ with $T<\infty$ that is maximal in the sense that
\beq \label{ass:degen} \liminf_{t\upto T} \inj_{g(t)}(M)=0.
\eeq
Then the following properties hold:
\begin{enumerate}
\item
\label{1.1:1}
The `pinching set' $F\subset M$ defined by
\beq 
\label{def:pinching-set} 
F:=\{p\in M:\, \liminf_{t\upto T}\inj_{g(t)}(p)=0\}\eeq
is nonempty and closed, and its complement
$\cu:=M\setminus  F$ is nonempty and supports a complete hyperbolic metric $h$ with finite volume and cusp ends, so that $(\cu, h)$ is conformally equivalent to a finite disjoint union of closed Riemann surfaces $M_i$ with finitely many punctures and genus strictly less than that of $M$, and so that 
$$g(t)\to h \text{ smoothly locally on } \cu \text{ as }t\upto T.$$
\item
\label{1.1:2}
The  `bubble' set 
\beq 
\label{def:S} 
S:=\{p\in \cu\ :\ \exists \ep>0 \text{ s.t. }
\limsup_{t\upto T} E(u(t),g(t),V)\geq \ep \text{ for all  neighbourhoods } V\subset M \text{ of } p\}
\eeq
is a finite set, and
there exists a smooth map $\bar u:\cu\setminus  S\to N$, 
with $\bar u\in H^1(\cu,h,N)$,
such that 
$$u(t)\to \bar u \text{ as }t\upto T$$
smoothly locally in  $\cu\setminus  S$ and weakly locally in $H^1$ on\, 
$\cu$.
Moreover, $\bar u$ extends to a collection of maps $u_i\in H^1(M_i,N)$.
\end{enumerate}
\end{thm}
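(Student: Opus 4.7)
The plan is to extract all information from the two summands in the energy identity and combine them with the classical degeneration theory of hyperbolic surfaces. From the energy inequality, the dissipation $\int_0^T \|\tau_g(u)\|_{L^2}^2\,dt + \tfrac1{\eta^2}\int_0^T \|\partial_t g\|_{L^2}^2\,dt \leq E(0)$ is finite, and $E(t)$ is uniformly bounded by $E(0)$. These two global-in-time bounds, together with the fact that the flow stays in $\m_c$ and that $\liminf_{t\upto T}\inj_{g(t)}(M)=0$, are the sole flow-theoretic inputs.

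For part~\ref{1.1:1}, I would first apply Mumford compactness (together with the Collar Lemma of Keen-Randol and the Deligne-Mumford description \cite[Thm.~A.4]{RT-horizontal}) to any sequence $t_k\upto T$. This yields a noded limit surface and identifies its regular part as a disjoint union of finite-area punctured hyperbolic surfaces $M_i\setminus\{\text{punctures}\}$ of strictly smaller genus: the degeneration occurs along finitely many disjoint simple closed geodesics whose lengths tend to zero, so the complement $\cu$ of the (closed) limit of their collar neighbourhoods supports a hyperbolic metric $h$. The set $F$ is precisely the set of points whose $g(t)$-injectivity radius collapses, so it is closed and nonempty by hypothesis. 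To upgrade subsequential Gromov-Hausdorff-type convergence to \emph{smooth full} convergence $g(t)\to h$ on compact subsets of $\cu$, I would use that $\partial_t g=\tfrac{\eta^2}{4}Re(P_g(\Phi(u,g)))$ is the real part of a holomorphic quadratic differential, hence elliptic regularity turns the $L^2$-in-time bound on $\partial_t g$ into higher-order interior estimates on the thick part, making the flow trajectory Cauchy in every $C^k$-norm on compact subsets of $\cu$.

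For part~\ref{1.1:2}, define $S$ as stated and observe that a standard $\ep$-regularity argument for the harmonic map flow (with constants uniform on any region with a lower $\inj$-bound) implies that at most $E(0)/\ep_0$ points can lie in $S$, so $S$ is finite. On a compact $K\subset \cu\setminus S$, the injectivity radius is bounded below and the local energy stays below $\ep_0$ for $t$ close to $T$, so the $\ep$-regularity estimates give uniform $C^k$ bounds on $u(t)$ on $K$. Combined with $\|\partial_t u\|_{L^2}=\|\tau_g(u)\|_{L^2}\in L^2(0,T)$, this yields a full smooth limit $\bar u$ on $\cu\setminus S$. The uniform energy bound and conformal invariance of the Dirichlet integral then give, on every compact $K\subset \cu$, a $g(t)$-uniform (hence $h$-uniform) $H^1$-bound, so up to subsequence $u(t)\weakto \bar u$ weakly in $H^1_{loc}(\cu,h)$; the limit is unique because it coincides with the smooth limit off $S$, so the whole family converges weakly. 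To extend $\bar u$ to $H^1(M_i,N)$ across the cusps and the bubble points in $S$, note that $\bar u\in H^1(\cu,h)$ by lower semicontinuity, conformally transfer to a pointed disc near each puncture or bubble point, and use that single points have zero capacity in two dimensions, so an $H^1$-map extends uniquely.

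The main obstacle is the first part: converting the mere $L^2$-integrability of $\partial_t g$ into smooth convergence of $g(t)$ to a limiting hyperbolic metric on the thick part. This requires combining the variational structure of $P_g(\Phi)$ with the Deligne-Mumford picture, and controlling how the $L^2$-projection onto $\Hol(M,g)$ interacts with the degeneration of the underlying conformal structure. By contrast, once the metric limit is understood, the analysis of $u$ on the thick part reduces to harmonic map flow theory on a fixed surface with finitely many concentration points, which is essentially standard.
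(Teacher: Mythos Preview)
Your overall strategy is sound, and your identification of the main obstacle is exactly right. The paper handles Part~\ref{1.1:1} by invoking a ready-made result (\cite[Theorem~1.2]{RT-horizontal}, stated here as Theorem~\ref{thm:horizontal}) about \emph{horizontal curves} of hyperbolic metrics: any smooth curve $g(t)$ in $\m_{-1}$ with $\partial_t g=Re(\Psi(t))$ for holomorphic $\Psi$ and with finite length $\int_0^T\|\partial_t g\|_{L^2}\,dt<\infty$ converges smoothly locally, without diffeomorphisms, to a complete finite-volume hyperbolic limit on the complement of the pinching set. The flow input is just Cauchy--Schwarz on the energy identity, giving $\cl(0)^2\leq \eta^2 T\,E(0)<\infty$.

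Your sketch contains the two ingredients that drive that theorem---elliptic interior estimates for holomorphic quadratic differentials, yielding $|\partial_t g|_{C^l}(x)\leq C[\inj_{g(t)}(x)]^{-1/2}\|\partial_t g\|_{L^2}$, and Lipschitz control of $t\mapsto[\inj_{g(t)}(x)]^{1/2}$ by $\|\partial_t g\|_{L^2}$---but you should be aware that the second is essential and you do not mention it: without it you cannot ensure that a fixed compact subset of $\cu$ stays in the $\de$-thick part uniformly in $t$, which is what allows you to integrate the first estimate and conclude the trajectory is Cauchy. More importantly, your detour through Deligne--Mumford compactness is at best redundant and at worst misleading. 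That compactness gives convergence of $g(t_n)$ only \emph{after pull-back by diffeomorphisms}, so the ``complement of the limit of the collar neighbourhoods'' is not a priori a well-defined subset of $M$; the paper explicitly contrasts its diffeomorphism-free convergence with what Deligne--Mumford would give. The clean route is to define $\cu=M\setminus F$ intrinsically via the pinching set, then run the Cauchy argument directly on compact subsets of $\cu$; no subsequential compactness step is needed.

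For Part~\ref{1.1:2} your outline matches the paper's (Proposition~\ref{prop:conv-nondeg}) closely. Two points of care: finiteness of $S$ requires first showing that $\limsup$-concentration implies $\liminf$-concentration at the fixed threshold $\eps_0$, which the paper does via a local energy estimate (Lemma~\ref{lemma:cut-energy}) rather than by a direct counting argument; and ``uniform $C^k$ bounds from $\eps$-regularity'' overstates what one gets immediately---the paper first proves $\sup_t\int\varphi^2|\partial_t u|^2<\infty$ via a Gronwall argument, then bootstraps through parabolic regularity to obtain H\"older continuity up to time $T$, which is what gives the full (not merely subsequential) smooth limit.
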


The convergence of the metric $g(t)$ here should be contrasted with the convergence of a sequence $g(t_n)$, with $t_n\upto T$, that could be deduced from the differential geometric form of Deligne-Mumford compactness (see e.g. \cite[Theorem A.4]{RT-horizontal}). Our convergence
is uniform in time, and does not require modification by diffeomorphisms.

This theorem already tells us enough to be able to define the continuation of the flow beyond time $T$. We simply take each closed Riemann surface $M_i$, equip it with a conformal metric $g_i$ in the corresponding space $\m_c$ of metrics of constant curvature, and restart the flow on each $M_i$ separately with $u_i$ as the initial map. 
The choice of $g_i$ is uniquely determined when the genus of $M_i$ is at least one, but on the sphere it is initially defined only up to pull-back by M\"obius maps. In this case, 
we must find a way of making a canonical choice of $g_i$ in order to obtain
a canonical choice of continuation. We do this by returning to the limit metric $h$, which induces a smooth conformal complete hyperbolic metric of finite area on the sphere with punctures, 
and choose the metric $g_i$ to be the limit $g_\infty$ of the rescaled Ricci flow on the sphere that starts with the metric $h$, as given by the following theorem which follows immediately from a combination of \cite[Theorem 1.2]{revcusp} (see also the simplifications arising from
\cite{TY}) and \cite{Ham88, Cho91} (see also \cite{GT2}).
Note that by Gauss-Bonnet, the volume of the metric $h$ must be $2\pi(n-2)$, where $n$ is the number of punctures.

\begin{thm}
\label{RFcanonical}
Suppose $\{p_1,\ldots,p_n\}\subset S^2$ is a finite set of points and $h$ is a complete conformal hyperbolic metric on $S^2\setminus \{p_1,\ldots,p_n\}$. Then there exists a unique  smooth Ricci flow $g(t)$ on $S^2$, $t\in (0,T)$, $T=\frac{n-2}{4}$, i.e. a smooth complete solution of 
$\pl{g}{t}=-2Kg$ with curvature uniformly bounded below and such that 
$g(t)\to h$ smoothly locally on $S^2\setminus \{p_1,\ldots,p_n\}$ as $t\downto 0$. 
(Here $K$ is the Gauss curvature.)
Moreover, there exists a smooth conformal metric $g_\infty$ on $S^2$ of constant Gauss curvature $1$ such that $\frac{g(t)}{2(T-t)}\to g_\infty$ smoothly as $t\upto T$.
\end{thm}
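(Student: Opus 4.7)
The plan is to combine the cusp-smoothing Ricci flow of \cite{revcusp} with the classical theory of Ricci flow on the sphere due to \cite{Ham88, Cho91}, as the statement already suggests. I would proceed in three steps.

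\textbf{Step 1: Existence and uniqueness.} I would first invoke \cite[Theorem 1.2]{revcusp}, whose hypotheses are met by the complete hyperbolic metric $h$ on $S^2\setminus\{p_1,\ldots,p_n\}$: it produces a unique smooth complete Ricci flow $g(t)$ on $S^2$ defined on a maximal interval $(0,T_{\max})$, with curvature uniformly bounded below, and with $g(t)\to h$ smoothly locally on $S^2\setminus\{p_1,\ldots,p_n\}$ as $t\downto 0$. The uniqueness within the class stated in Theorem \ref{RFcanonical} is precisely the content of that theorem, and is further clarified by the general uniqueness results of \cite{TY}.

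\textbf{Step 2: Identifying the extinction time.} On the interval $(0,T_{\max})$, $g(t)$ is a smooth Ricci flow on the closed surface $S^2$, so $\pddt dV_{g(t)} = -2K\, dV_{g(t)}$ and by Gauss--Bonnet
\[
\frac{d}{dt}\operatorname{Area}(S^2,g(t)) = -2\int_{S^2} K\, dV_{g(t)} = -4\pi\chi(S^2) = -8\pi.
\]
Combined with $\operatorname{Area}(h) = 2\pi(n-2)$ (which, as noted just before the theorem, is forced by Gauss--Bonnet on the cusped hyperbolic surface) and with the local convergence from Step~1 plus the uniform lower curvature bound, this gives
\[
\operatorname{Area}(S^2,g(t)) = 2\pi(n-2) - 8\pi t,
\]
pinning down $T_{\max} = \tfrac{n-2}{4} = T$.

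\textbf{Step 3: Normalised convergence to the round sphere.} For any fixed $t_0\in(0,T)$, the restriction of $g(t)$ to $[t_0,T)$ is a smooth Ricci flow on the closed surface $S^2$ with smooth initial data $g(t_0)$ that becomes extinct exactly at the time $T$. Hamilton's theorem \cite{Ham88} in the positive-curvature case, extended by Chow \cite{Cho91} to remove any sign condition on the curvature (and reproved more concisely in \cite{GT2}), then guarantees that
\[
\frac{g(t)}{2(T-t)}\to g_\infty\qquad \text{smoothly on }S^2\text{ as }t\upto T,
\]
where $g_\infty$ is a smooth metric of constant Gauss curvature $1$ on $S^2$. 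This yields the final conclusion and completes the argument.

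\textbf{Main obstacle.} All the nontrivial content sits in Step~1: because $h$ has cusps at the $p_i$, it does not extend even continuously to a metric on $S^2$, so one cannot invoke standard short-time existence theory to start the flow. The whole argument therefore hinges on the delicate instantaneously-complete Ricci flow theory of \cite{revcusp,TY} to smooth out the cusps in a canonical way. Once that is available, Steps~2 and~3 reduce to a one-line area computation and a direct citation of the long-established behaviour of Ricci flow on $S^2$.
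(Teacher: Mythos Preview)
Your proposal is correct and matches the paper's approach: the paper does not give a standalone proof of this theorem but simply states that it ``follows immediately from a combination of \cite[Theorem 1.2]{revcusp} (see also the simplifications arising from \cite{TY}) and \cite{Ham88, Cho91} (see also \cite{GT2}),'' which is precisely the decomposition you carry out in Steps~1 and~3, with your Step~2 supplying the standard Gauss--Bonnet area computation linking the two.
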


Theorem  \ref{thm:basic_convergence}, with the aid of Theorem \ref{RFcanonical},
thus establishes that our flow can be continued canonically beyond the singular time $T$ as a finite collection of flows. 
The construction does not require us to stop prior to the singular time $T$ and perform surgery. Instead, we flow right to the singular time, and the surgery we do consists of nothing more than adding points to fill in punctures in the domain (the analogue of adding an arbitrary cap in a traditional surgery argument).

\subsection{No loss of information at finite-time collar degenerations}

At this stage we have however not yet established a very strong connection between the flow prior to a collar degeneration singularity and the flows after the singularity. We need to relate the topology of $M$ to the topology of the surfaces $M_i$, and to relate the topology of the map $u(t)$ prior to the singularity to the flow maps afterwards, and most of this paper will be devoted to achieving this. The former issue is dealt with by the following

\begin{prop}
\label{prop:collar_desc}
In the setting of Theorem \ref{thm:basic_convergence}, the injectivity radius
converges \emph{uniformly} to a continuous limit:
\beq 
\inj_{g(t)}(x)\to\left\{
\begin{aligned}
& \inj_h(x) &\quad &  \text{for }x\in\cu\\
& 0 & \quad &  \text{for }x\in F=M\setminus  \cu, 
\end{aligned}
\right.\label{conv:inj}
\eeq
as $t\upto T$. 
Moreover, the set $F$ from \eqref{def:pinching-set} consists of $k\in\{1,\ldots,3(\ga-1)\}$ components $\{F_j\}$, and 
the total number of punctures in Theorem \ref{thm:basic_convergence} is $2k$.

Furthermore, there exist $\de_0\in(0,\arsinh(1))$ and $t_0\in [0,T)$ 
such that
for every $t\in [t_0,T)$ there are exactly $k$ simple closed geodesics $\si_j(t)\subset (M,g(t))$ with length $\ell_j(t)=L_{g(t)}(\si_j(t))< 2\de_0$ and the lengths of these geodesics decay according to 
\beq \label{est:ell}
\ell_j(t)\leq C (T-t)(E(t)-E(T))\to 0,\text{ as }t\upto T,
\eeq
for some $C=C(\eta,\gamma)$.
In addition, for every $\de\in (0,\de_0]$ and $t\in [t_0,T)$ the set $\de\thin(M,g(t))$ consists of the union of the 
(possibly empty)
disjoint cylindrical `subcollar' regions $\Col_j=\Col_j(t,\de)$ around $\si_j(t)$ which are
isometric to 
\beq
\label{cylinder}
(-X_j, X_j) \times S^1 \text{equipped with the metric }\rho^2_{j}(s)(ds^2+d\theta^2)
\eeq
where 
\beq 
\label{eq:Xj}
X_j=X_j(t,\de)=  \frac{2\pi}{\ell_j(t)}\arccos \left(\frac{\sinh(\frac{\ell_j(t)}{2})}{\sinh \delta}\right), \text{ if }  2\de \geq \ell_j(t), \text{ while } X_j=0 \text{ if } 2\de<\ell_j(t)\eeq
and  
$$\rho_{j}(s)=\rho_{\ell_j(t)}(s)=\frac{\ell_j(t)}{2\pi \cos(\frac{\ell_j(t) s}{2\pi})},$$
and 
for all $t$ sufficiently large (depending in particular on $\de$) we have $F_j\subset \Col_j(t,\de)$.

\end{prop}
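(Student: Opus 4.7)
The plan is to combine three ingredients: the smooth local convergence $g(t)\to h$ on $\cu$ from Theorem~\ref{thm:basic_convergence}; the $L^2$-in-time bound
$\int_0^T\|P_g(\Phi(u,g))\|_{L^2}^2\,dt\leq (32/\eta^2)(E(0)-E(T))$
from the energy identity \eqref{energy-identity}; and the Collar Lemma of Keen--Randol, which describes the $\delta$-thin part of any hyperbolic surface as a disjoint union of standard collars around simple closed geodesics of length $<2\delta$, with precisely the metric form \eqref{cylinder}--\eqref{eq:Xj}. I will prove the length-decay estimate \eqref{est:ell} first, then read off the remaining claims from it.

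\textbf{Length estimate (main step).} For a simple closed $g(t)$-geodesic $\sigma$ with length $\ell(t)<2\delta_0$, the Collar Lemma places $\sigma$ at the centre $s=0$ of a collar of the form \eqref{cylinder}; the standard first variation of length under metric deformation gives
\begin{equation*}
\dot\ell = \tfrac12\int_\sigma \frac{(\partial_t g)(T,T)}{|T|_{g(t)}}\,ds,
\end{equation*}
with $T$ tangent to $\sigma$. Substituting $\partial_t g=(\eta^2/4)\,\mathrm{Re}(P_g(\Phi))$ and expanding the holomorphic quadratic differential $P_g(\Phi)=\phi\,dz^2$ on the collar as a Fourier series $\phi(z)=\sum_{n\in\Z}c_n e^{nz}$ in $z=s+i\theta$, the integral picks out only the zero mode and evaluates to an explicit constant multiple of $c_0/\ell$. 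A direct computation of the collar $L^2$-norm in the same Fourier basis, using the explicit form of $\rho$, yields the Fourier mode bound $|c_0|\leq C\ell^{3/2}\|P_g(\Phi)\|_{L^2(M)}$ with $C$ uniform as $\ell\to 0$. Combining these gives $|\dot\ell|\leq C\,\ell^{1/2}\|P_g(\Phi)\|_{L^2(M)}$, equivalently $|\partial_t\sqrt{\ell}|\leq C\,\|P_g(\Phi)\|_{L^2(M)}$. Integrating and applying Cauchy--Schwarz with the energy identity then yields the Cauchy bound $|\sqrt{\ell(t_2)}-\sqrt{\ell(t_1)}|\leq C'\sqrt{(t_2-t_1)(E(t_1)-E(t_2))}$, so $\ell$ admits a limit as $t\upto T$. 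Since \eqref{ass:degen} combined with the Collar Lemma forces at least one such limit to vanish, for any geodesic with $\ell(T)=0$ letting $t_2\upto T$ and squaring produces \eqref{est:ell}.

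\textbf{Remaining claims.} Given \eqref{est:ell}, the Collar Lemma of Keen--Randol directly gives the stated decomposition of $\delta\thin(M,g(t))$ into disjoint collars of the form \eqref{cylinder}. I will choose $\delta_0$ small enough that every non-pinching geodesic that is ever shorter than $2\arsinh(1)$ (a finite set by the Collar Lemma) has $\ell(T)>2\delta_0$, and then $t_0<T$ so that for $t\geq t_0$ the only $g(t)$-geodesics of length $<2\delta_0$ are the $k$ pinching ones $\sigma_j(t)$. The upper bound $k\leq 3(\gamma-1)$ is the classical maximum number of disjoint simple closed geodesics on a closed surface of genus $\gamma$, and $k\geq 1$ follows from \eqref{ass:degen}; each pinching $\sigma_j$ contributes two cusps to $(\cu,h)$, giving $2k$ punctures. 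For the uniform convergence \eqref{conv:inj}, smooth local convergence gives uniform convergence of $\inj$ on compact subsets of $\cu$, while on the collars $\inj_{g(t)}$ can be read off $(\rho_j,\ell_j)$ and compared to the limit directly using \eqref{est:ell}. Finally, any $p\in F_j$ has cylindrical coordinate $s(p,t)$ which stays bounded inside $\Col_j(t,\delta)$ as $t\upto T$ (because $X_j(t,\delta)\to\infty$), whence $\rho_j(s(p,t))\to 0$ yields both $\inj_{g(t)}(p)\to 0$ and $F_j\subset\Col_j(t,\delta)$ eventually. The principal technical obstacle is maintaining the uniformity of the Fourier mode bound $|c_0|\leq C\ell^{3/2}\|P_g(\Phi)\|$ as the collar degenerates, which requires careful handling of the holomorphic expansion on the degenerating cylinder.
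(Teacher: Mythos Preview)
Your derivation of the length decay \eqref{est:ell} via first variation plus the Fourier bound $|c_0|\leq C\ell^{3/2}\|P_g(\Phi)\|_{L^2}$ on the collar is correct, and is in fact essentially the argument behind the horizontal-curve estimates in \cite{RT-horizontal} that the paper simply cites. So for that part you are reproducing the paper's cited input rather than taking a different route.

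There is, however, a genuine gap in your treatment of the uniform convergence \eqref{conv:inj}. Your Fourier argument controls $\ell_j(t)$, i.e.\ the injectivity radius \emph{along the central geodesic}, but not $\inj_{g(t)}(x)$ at an arbitrary point $x\in M$. Your sentence ``on the collars $\inj_{g(t)}$ can be read off $(\rho_j,\ell_j)$ and compared to the limit directly'' hides the real difficulty: a fixed point $x\in M$ has \emph{time-dependent} collar coordinates $s(x,t)$, since the isometry identifying the collar with \eqref{cylinder} changes with $g(t)$. Controlling $\ell_j(t)$ alone says nothing about how $s(x,t)$ drifts, so you cannot conclude that $\rho_j(s(x,t))$ converges, let alone uniformly. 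The same issue undermines your claim that $s(p,t)$ ``stays bounded'' for $p\in F_j$; the fact that $X_j(t,\de)\to\infty$ does not by itself prevent $s(p,t)$ from going to infinity as well. The paper avoids this entirely by invoking the pointwise estimate
\[
\left|\tfrac{d}{dt}\,\inj_{g(t)}(x)^{1/2}\right|\leq K_0\,\|\partial_t g\|_{L^2(M,g(t))}
\quad\text{for every }x\in M
\]
from \cite[Lemma 2.2]{RT-horizontal}, which integrates immediately to uniform convergence, and by using the nested family $F_K(t)=\{p:\inj_{g(t)}(p)\leq (K\cl(t))^2\}$ from \cite[Lemma 3.1]{RT-horizontal} to pin down $F$ and obtain $\de_0$ via a compactness argument. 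These pointwise bounds can themselves be proved by the same collar Fourier analysis you use for $\ell_j$ (that is the content of \cite{RT-horizontal}), so your strategy is salvageable, but as written the uniform-convergence step and the inclusion $F_j\subset\Col_j(t,\de)$ are not justified.
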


The subcollars $\Col_j$ are subsets of collar neighbourhoods of the collapsing simple closed geodesics described by the Collar lemma (see e.g. \cite[Lemma A.1]{RT-horizontal}).
If $\de(t)\downto 0$ 
sufficiently slowly so that
$\de(t)^{-1}(T-t)(E(t)-E(T))\to 0$ as $t\upto T$, then $X_j(t,\de(t))\to \infty$ as $t\upto T$.

This proposition gives us a topological description of how $M$ can be reconstructed from the $M_i$. We remove $2k$ small discs from the $M_i$ at the punctures described in Theorem \ref{thm:basic_convergence}, and glue in cylinders corresponding to the $k$ collar regions from the proposition. (We see that there will be $2k$ punctures.)

The proposition also demonstrates what we must establish in order to relate the flow map before the singularity to the flow maps after the singularity. The continuation of the flow is given in terms of the smooth local limit $\bar u$ on $\cu\setminus  S$ from Theorem \ref{thm:basic_convergence}. Therefore we can potentially lose parts of the map at the points $S$ and parts of the map `at infinity' in $\cu$. As we shall see in part \ref{part1} of Theorem \ref{thm:bubbles}, the loss of energy at points in $S$ 
is entirely accounted for in terms of bubbles, 
i.e. maps $\om_i:S^2\to N$ that are harmonic and non-constant and are thus themselves branched minimal spheres.

On the other hand, we have to be concerned about parts of the map that are lost at infinity in $\cu$. By Proposition \ref{prop:collar_desc}, we must specifically be concerned with the restriction of the flow map $u(t)$ to the collar regions $\Col_j$.
If we view these collar regions conformally as the cylinder \eqref{cylinder} with the flat metric $g_0=ds^2+d\th^2$, then any fixed length portion of an end of these cylinders will have injectivity radius $\inj_{g(t)}$ bounded below by a positive number, uniformly as $t\upto T$, and thus by Proposition \ref{prop:collar_desc}, it will remain in a compact subset of $\cu$ and the map there will be captured in the limit $\bar u$. 
However, this says nothing about what happens away from the ends of the cylinders, and we have to be concerned because the map there need not become harmonic since the tension field is a priori unbounded in $L^2$.
Nevertheless, part \ref{part_neck} of Theorem \ref{thm:bubbles} will show that near enough the centre of these cylinders -- essentially on the $[T-t]^\half\thin$ part of $(M,g(t))$ -- we will be able to describe the map as a collection of bubbles connected together by curves.

This leaves the worry that a little outside this thin region (for example where the injectivity radius is of the order of $[T-t]^{\half-\ep}$) we might accumulate `unstructured' energy that is lost down the collars in the limit, and is not representing any branched minimal immersion or curve, but instead represents some arbitrary map. 
Again, this is ruled out in the following Theorem \ref{thm:bubbles}, part \ref{part:energy_limit}, where we show that all lost energy lives not just on the $[T-t]^{\half}\thin$ part but even on the
$[T-t]\thin$ part.

\begin{thm}
\label{thm:bubbles}
In the setting of 
Theorem \ref{thm:basic_convergence}, we can extract
a finite collection of branched minimal spheres at the singular time in order to obtain no loss of energy/topology in the following sense.
There exists a sequence $t_n\upto T$ such that
\beq 
\label{tau_control}
\left[ \|\tau_{g}(u)(t_n)\|_{L^2(M,g(t_n))}+\|P_{g}(\Phi(u,g))(t_n)\|_{L^2(M,g(t_n))}\right] \cdot(T-t_n)^\half
\to 0,
\eeq
and so that
\begin{enumerate}
\item
\label{part1}
At each $x\in S$, finitely many bubbles (i.e. nonconstant harmonic maps $S^2\to (N,g_N)$) develop as $t_n\upto T$. All of these bubbles develop at scales of order $o\left((T-t_n)^\half\right)$ and they  account for
all of the energy that is lost near $x\in S$, as is made precise in part \ref{part-2-prop} of Proposition \ref{prop:conv-nondeg}.
In particular, if  $\om_1,\ldots,\om_{m}$ is the complete list of bubbles developing at points in $S$ then
\beqa \label{eq:energy-thick}
E_{thick}:=\lim_{\de\downto 0}\lim_{t\upto T} E(u(t),g(t),\de\thick(\cu,h))&=  E(\bar u, h, \cu) + \sum_{l=1}^{m} E(\om_l)\\
&=\sum_{i} E(u_i, M_i)+ \sum_{l=1}^{m} E(\om_l).
\eeqa
\item
\label{part:energy_limit}
All the energy 
$$E_{thin}:=
E(T)
-E_{thick},$$
$ E(T):=\lim_{t\upto T}E(t)$,
lost down the 
collars concentrates on the $[T-t]\thin$ part in the sense that 
\beq
\label{en_id_thin1}
E_{thin}=\lim_{t\upto T} E(u(t),g(t),[T-t]\thin(M 
,g(t))).
\eeq
In fact, we have the more refined information that 
\beq
\label{en_id_thin2}
E_{thin}=\lim_{K\to\infty}\liminf_{t\upto T} E\big(u(t),g(t),[K(T-t)(E(t)-E(T))]\thin(M 
,g(t))\big).
\eeq
\item 
\label{part_neck}
The restriction of the maps  $u(t_n)$ to the $(T-t_n)^{\frac 12}\thin$ part of the degenerating subcollars $\Col_j$ from Proposition \ref{prop:collar_desc} has tension
$\|\tau_{g_0}(u(t_n))\|_{L^2}\to 0$ as $n\to\infty$ with respect to $g_0=ds^2+d\th^2$
and hence can be assumed to
converge to a \textit{full bubble branch} as explained in Proposition \ref{prop:bb-convergence} below.
\end{enumerate}
\end{thm}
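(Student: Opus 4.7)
The plan is to treat the three parts separately after a pigeonhole step that selects $t_n$. Parts \ref{part1} and \ref{part_neck} reduce to standard tools -- bubble-tree analysis and a conformal calculation respectively -- once \eqref{tau_control} is available, while Part \ref{part:energy_limit} is the main difficulty and requires a Fourier analysis on the degenerating collars.

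\emph{Selection of $t_n$.} Integrating \eqref{energy-identity} over $[0,T)$ gives $\int_0^T(\|\tau_g(u)\|_{L^2}^2+\tfrac{\eta^2}{32}\|P_g(\Phi(u,g))\|_{L^2}^2)\dt\le E(0)-E(T)<\infty$. On the dyadic intervals $I_n=[T-2^{-n},T-2^{-n-1}]$ pick $t_n\in I_n$ where the integrand is at most twice its mean on $I_n$; multiplying by $T-t_n\le 2^{-n}$ gives $(T-t_n)(\|\tau\|_{L^2}^2+\|P_g\Phi\|_{L^2}^2)(t_n)\to 0$, and taking square roots yields \eqref{tau_control}.

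\emph{Parts \ref{part1} and \ref{part_neck}.} Near each $p\in S\subset\cu$ the metrics $g(t_n)$ are uniformly non-degenerate by Theorem \ref{thm:basic_convergence}, so the standard bubble-tree analysis for almost-harmonic maps (cf.\ \cite{Struwe85, DT, finwind}) applies to $u(t_n)$ and extracts finitely many non-constant harmonic bubbles without loss of energy, giving \eqref{eq:energy-thick}. The scale bound $\lambda_n=o((T-t_n)^{1/2})$ is forced by combining \eqref{tau_control} with the fact that the rescaled tension at each bubble must vanish in $L^2$; Proposition \ref{prop:conv-nondeg} records the precise statement. For Part \ref{part_neck}, writing each subcollar as $((-X_j,X_j)\times S^1,\rho^2 g_0)$ with $g_0=ds^2+d\theta^2$, the two-dimensional conformal identity $\tau_{g_0}(u)=\rho^2\tau_g(u)$ together with $dv_g=\rho^2\ds\,d\theta$ gives, on the $(T-t_n)^{1/2}\thin$ part $A$ where $\rho\le C(T-t_n)^{1/2}$,
$$\|\tau_{g_0}(u(t_n))\|_{L^2(g_0,A)}^2=\int_A\rho^2|\tau_g(u(t_n))|^2\,dv_g\le C(T-t_n)\|\tau_g(u(t_n))\|_{L^2}^2\to 0,$$
and convergence to a full bubble branch is then an immediate application of Proposition \ref{prop:bb-convergence}.

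\emph{Part \ref{part:energy_limit}.} The difficulty here is that the estimate must hold for \emph{every} $t\upto T$, not merely along $t_n$, so the tension need not be small. Working on each collar in a Nash embedding, expand $u(t,s,\theta)=\sum_n\hat u_n(t;s)e^{in\theta}$ into angular Fourier modes; the energy on an annular region splits conformally as $\pi\sum_n\int(|\partial_s\hat u_n|^2+n^2|\hat u_n|^2)\ds$. The non-zero modes satisfy a Poincar\'e-type exponential decay away from the collar ends that confines their energy contribution arbitrarily close to the central geodesic. The zero mode is the delicate part: on a hyperbolic collar the $L^2$-orthogonal complement of $\Hol(M,g)$ is essentially the span of the non-constant Fourier modes of $\Phi(u,g)$ with respect to $d\theta$, so smallness of $\|P_g(\Phi(u,g))\|_{L^2}$, combined with testing $\Phi(u,g)$ against explicit holomorphic quadratic differentials on $\Col_j$ whose $L^2$-masses are controlled via the Collar lemma, yields pointwise control of the angular average of $\Phi(u,g)$, hence of $|\partial_s\hat u_0|^2-|\partial_\theta\hat u_0|^2$, on annular regions where $\rho\gg\ell_j(t)$. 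Integrating along $s$ over $\rho\in[K(T-t)(E(t)-E(T)),\delta]$ and using \eqref{est:ell} yields \eqref{en_id_thin2}, and \eqref{en_id_thin1} follows by a diagonal argument using that the two scales are comparable up to a bounded factor as $t\upto T$.
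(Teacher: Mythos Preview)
Your selection of $t_n$ and your arguments for Parts~\ref{part1} and~\ref{part_neck} are correct and essentially coincide with the paper's. The paper also defers Part~\ref{part1} to Proposition~\ref{prop:conv-nondeg}, and your conformal computation for Part~\ref{part_neck} is exactly the one carried out in Section~\ref{sect:deg} (using $\rho\le\inj\le(T-t_n)^{1/2}$ on the $(T-t_n)^{1/2}\thin$ part).

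Your argument for Part~\ref{part:energy_limit}, however, does not work. You correctly identify the core difficulty---the claim must hold for \emph{every} $t\upto T$, where neither $\|\tau_g(u)\|_{L^2}$ nor $\|P_g(\Phi)\|_{L^2}$ is small---but your Fourier-mode argument then tacitly uses precisely the pointwise-in-time control you said was unavailable. The ``Poincar\'e-type exponential decay'' of the non-zero angular modes holds for (almost-)harmonic maps on cylinders, not for an arbitrary map at a time $t$ where the tension can be huge; without some smallness of $\tau$ there is no mechanism forcing high modes to decay. Likewise, your zero-mode step invokes ``smallness of $\|P_g(\Phi)\|_{L^2}$'', which you only have along the special sequence $t_n$, not for general $t$. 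Finally, your derivation of \eqref{en_id_thin1} from \eqref{en_id_thin2} is incorrect: the scales $T-t$ and $K(T-t)(E(t)-E(T))$ are \emph{not} comparable up to a bounded factor, since $E(t)-E(T)\to 0$.

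The paper's approach is fundamentally different and avoids any pointwise-in-time elliptic control. It relies on the \emph{parabolic} local energy estimate of Lemma~\ref{lemma:cut-energy}: for a cut-off $\vph$ supported in $\de_K(t_0)\thick(M,g(t_0))$, the cut-off energy $E_\vph(t)$ has a limit as $t\upto T$ and satisfies
\[
|E_\vph(t)-E_\vph(s)|\le (E(t)-E(s))+C\bigl[\de_K(t_0)^{-1/2}+\|d\vph\|_{L^\infty}\bigr](s-t)^{1/2}(E(t)-E(s))^{1/2}.
\]
This comes from integrating $\frac{d}{dt}E_\vph$ over $[t,s]$ and absorbing the bad terms using the monotone total energy; no smallness of $\tau$ or $P_g\Phi$ at a fixed time is needed. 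Applying this to suitable cut-offs $\vph_{K,t_0}$ that are $\equiv 1$ on $\de_{e\pi K}(t_0)\thick$ and $\equiv 0$ on $\de_K(t_0)\thin$, with $\|d\vph_{K,t_0}\|_{L^\infty}\le 2$, one gets
\[
\bigl|E_{K,t_0}(t_0)-\lim_{t\upto T}E_{K,t_0}(t)\bigr|\le (E(t_0)-E(T))+CK^{-1/2}+C(T-t_0)^{1/2}(E(t_0)-E(T))^{1/2},
\]
and letting $t_0\upto T$ then $K\to\infty$ yields \eqref{en_id_thin2}. The identity \eqref{en_id_thin1} then follows by sandwiching: for large $t$ one has $\de_K(t)\le T-t$ (giving one inequality), while for any fixed $\de>0$ eventually $(T-t)\thin\subset\de\thin(\cu,h)$ by the uniform convergence of the injectivity radius (giving the other).
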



In the following proposition from \cite{HRT}, we write $a_n\ll b_n$, for sequences $a_n$ and $b_n$, if $a_n< b_n$ for each $n$ and $b_n-a_n\to\infty$ as $n\to\infty$.

\begin{prop}[Contents of Theorem 1.9 and Definition 1.10 of  \cite{HRT}]
\label{prop:bb-convergence}
For any sequence of maps $u_n:[-\hat X_n, \hat X_n]\times S^1\to N$, $\hat X_n\to \infty$,
for which the tension with respect to the flat metric $g_0=ds^2+d\th^2$ satisfies $\norm{\tau_{g_0}(u_n)}_{L^2}\to 0$ there exists a subsequence  that \textit{converges to a full bubble branch} in the following sense:

There exist
a finite number of sequences $s_n^m$ (for $m\in \{0,\ldots,\bar m\}$, $\bar m\in \N$)
with $-\hat X_n=: s_n^0\ll s_n^1\ll \cdots\ll s_n^{\bar m}:= \hat X_n$   such that 
\begin{enumerate}
\item
The \emph{connecting cylinders} $(s_n^{m-1}+\la,s_n^m-\la)\times S^1$, $\la$ large, are mapped near curves in the sense that
\beq \label{eq:collars-curves_new} \lim_{\la\to\infty}\limsup_{n\to\infty}\sup_{s\in (s_n^{m-1}+\la,s_n^m-\la)}\osc(u_n;\{s\}\times S^1)=0,
\eeq
for each $m\in \{1,\ldots,\bar m\}$.
\item For each $m\in \{1,\ldots,\bar m-1\}$ (if nonempty) the translated maps
$u_n^m(s,\th):=u_n(s+s_n^m,\th)$  converge  weakly in $H^1$ locally on $(-\infty,\infty)\times S^1$ to a
harmonic map $\om^m$ 
and strongly in $H_{loc}^1((-\infty,\infty)\times S^1)$ away from a finite number of points 
at which bubbles can be extracted in a way that each bubble is counted no more than once,
and so that in this convergence of $u_n^m$ to a bubble branch there is no-loss-of-energy on compact subsets of $(-\infty,\infty)\times S^1$.
Since $(-\infty,\infty)\times S^1$  is
conformally equivalent to the sphere with two points removed, $\om^m$ extends
to a harmonic map from $S^2$. This map can then be considered as a bubble (in particular a 
branched minimal immersion) if it is nonconstant. If it is constant, then there must be a nonzero number of bubbles developing within.
See Theorem 1.5 of \cite{HRT} for details. 
\end{enumerate}
\end{prop}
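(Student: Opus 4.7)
The strategy is the standard bubble-tree extraction adapted to long cylinders: select translation parameters $s_n^m$ greedily at places where a definite amount of energy concentrates, and then show that the remainder of the domain consists of connecting cylinders on which the map is nearly a curve. Fix an $\eps$-regularity threshold $\eps_0>0$ smaller than the usual constant for the harmonic map equation in two dimensions, chosen so that any approximately harmonic map with energy below $\eps_0$ on a unit disk enjoys uniform $C^1$ estimates on the concentric half-disk. The total energies $E(u_n)$ are uniformly bounded by assumption, so at most $\lfloor \sup_n E(u_n)/\eps_0\rfloor$ disjoint unit-length subcylinders of $[-\hat X_n,\hat X_n]\times S^1$ can carry energy $\geq\eps_0$; this provides the a priori bound on the number of bubble centres $\bar m-1$.

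Having chosen $s_n^1,\ldots,s_n^{m-1}$ with pairwise separations tending to infinity, let $s_n^m$ almost-maximise $s\mapsto E(u_n;(s-1,s+1)\times S^1)$ among points with $|s-s_n^{m'}|\to\infty$ for all $m'<m$. If this maximum drops below $\eps_0$ the selection terminates, setting $\bar m:=m$. Otherwise I translate by $-s_n^m$; by the small-tension hypothesis $\norm{\tau_{g_0}(u_n)}_{L^2}\to 0$ together with Rellich-Kondrachov, a subsequence of $u_n^m$ converges weakly in $H^1_{loc}(\R\times S^1)$ to a harmonic map $\om^m$, which extends to $S^2\to N$ by the Sacks-Uhlenbeck removable singularity theorem. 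A standard bubble analysis (in the spirit of Sacks-Uhlenbeck, as adapted in \cite{HRT}) extracts the finitely many further concentration points inside this region and yields strong convergence away from them with no loss of energy on compact subsets of $\R\times S^1$. Since each $s_n^m$ locks away at least $\eps_0$ of energy that cannot contribute to future choices, the induction halts after at most $\lfloor \sup_n E(u_n)/\eps_0\rfloor+1$ steps, after which a diagonal subsequence handles all $m$ simultaneously.

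The main obstacle is the oscillation decay \eqref{eq:collars-curves_new} on the connecting cylinders $(s_n^{m-1}+\la,s_n^m-\la)\times S^1$. The crucial intermediate step is the neck (no-energy-loss) statement: for all $\la$ and $n$ sufficiently large, $E(u_n;(s-1,s+1)\times S^1)\to 0$ uniformly in $s$ in the connecting range. This is a standard bubble-tree identity which would follow by combining the termination of the selection (ensuring energy on each unit subcylinder is strictly below $\eps_0$) with a three-annulus or Pohozaev-type identity for approximately harmonic maps on cylinders, exploiting $\norm{\tau_{g_0}(u_n)}_{L^2}\to 0$. Once this neck estimate is in hand, the slicewise $\eps$-regularity inequality
\beq
\osc\bigl(u_n;\{s\}\times S^1\bigr)^2 \leq C\bigl[E(u_n;(s-1,s+1)\times S^1)+\norm{\tau_{g_0}(u_n)}_{L^2((s-1,s+1)\times S^1)}^2\bigr],
\eeq
of the type used by Qing-Tian and in \cite{HRT}, immediately yields \eqref{eq:collars-curves_new} and completes the full bubble-branch decomposition claimed.
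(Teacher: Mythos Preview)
The paper does not prove this proposition at all; as its header indicates, it is quoted verbatim from \cite{HRT} (Theorem~1.9 and Definition~1.10 there) and is used as a black box in the proof of part~\ref{part_neck} of Theorem~\ref{thm:bubbles}. There is therefore no proof in the present paper against which to compare your proposal.

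Your outline is the standard bubble-tree extraction on long cylinders and is broadly in line with how \cite{HRT} and its antecedents (Ding--Tian, Qing--Tian) proceed. One point to flag: you write that ``the total energies $E(u_n)$ are uniformly bounded by assumption,'' but the proposition as reproduced in this paper carries no such hypothesis explicitly. Without it your finiteness bound on $\bar m$ and the entire greedy selection collapse. In every application within this paper the energy bound is inherited from the monotonicity \eqref{energy-identity} of the flow, and it is explicit in the original \cite{HRT} statement, so you should state it as an assumption if you intend a self-contained proof. Beyond that, the step you label ``standard'' --- the uniform decay of local energy on the connecting cylinders via a Pohozaev/three-annulus argument, exploiting $\|\tau_{g_0}(u_n)\|_{L^2}\to 0$ --- is in fact the substantive analytic content of the result, and is where the real work in \cite{HRT} (building on \cite{DT}) lies; your sketch correctly identifies it but does not carry it out.
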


\brmk
\label{was1.6rmk}
Proposition \ref{prop:conv-nondeg} will give a more general version of part \ref{part1} of Theorem \ref{thm:bubbles}, establishing the no-loss-of-energy property and control on the bubble scales also at finite-time singularities as considered in \cite{Rexistence} at which the metrics do not degenerate. 
As mentioned earlier, the analogue of this result when the underlying surface is $M=S^2$ can already be found in \cite[Theorem 1.6]{finwind} since \eqref{flow} is then just the harmonic map flow. That theorem also elaborates on the sense in which the finite collection of bubbles develop, and the strategy of its proof broadly carries over to our situation here.
\ermk

The key point of Theorem \ref{thm:bubbles} is that the degenerating collars, and indeed the whole surface, can be divided up into two regions: First, the cylinders making up $[T-t]\thin(M,g(t))$ (and even those making up $[T-t]^{1/2}\thin $) are sufficiently collapsed that when we rescale, the evolving map $u$ can be seen to have very small tension and can thus be represented in terms of branched minimal spheres.
Second, on the remaining $[T-t]\thick$ part, the limiting energy is fully accounted for by the energy of the limits $u_i$ and the energy of the bubbles.
This latter assertion is not a priori so clear since one might have a part of the flow map drifting down the collar, always living in a region such as where the injectivity radius is of the order of e.g.
$[T-t_n]^{1/3}$. Such a part of the map would have no reason to look harmonic in any way, and  might carry some nontrivial topology. This `unstructured' energy could in principle drift down the collar not because energy was flowing around the domain, but because the injectivity radius itself is evolving.

The key to ruling out this latter bad behaviour is the following theorem, which gives a more precise description of the convergence of the metric than the one given in Theorem \ref{thm:basic_convergence} and which asserts essentially that by time $t\in [0,T)$, the
metric $g(t)$ has settled down to its limit $h$ on the 
$[T-t]\thick$ part.
As we shall see, this represents an instance of a more general theory from \cite{RT-horizontal} describing the convergence of a general `horizontal curve' of hyperbolic metrics.

\begin{thm}
\label{thm:1}
In the setting of Theorem \ref{thm:basic_convergence}, 
there exists $\bar K<\infty$ depending on $\eta$ and the genus of $M$
(and determined in Lemma \ref{lemma:horiz2}) such that the following holds true: 

The `pinching set' $F\subset M$ defined in \eqref{def:pinching-set} 
can be characterised as 
\beq \label{eq:char-pinching}
F =\bigcap_{t<T} \{p\in M: \inj_{g(t)}(p)< \de_K(t)\},
\eeq
for any $K\geq \bar K$,
where
$\de_K(t):=K(T-t)\left(E(t)-E(T)\right)\downto 0$ as $t\upto T$ and   
$E(T):=\lim_{t\upto T} E(t)$.  
Equivalently, we have
\beq
\label{Uchar}
\cu:=M\setminus F=\bigcup_{t<T}\, [\de_K(t)]\thick(M,g(t)).
\eeq
In addition to the claims on $\cu$, $h$ and the convergence $g(t)\to h$ made in Theorem \ref{thm:basic_convergence},
for any $K\geq \bar K$, $t_0\in [0,T)$ and $t\in [t_0,T)$, we have that for every $l\in \N$
\beq
\label{est1ofthm1}
\|g(t)-h\|_{C^l([\de_K(t_0)]\thick(M,g(t_0)),g(t_0))}+\|g(t)-h\|_{C^l([\de_{K}(t_0)]\thick(\cu,h),h)}
\leq CK^{-\half}
\left[\frac{\de(t)}{\de(t_0)}
\right]^\half,
\eeq
where we abbreviate $\de(t)=\de_1(t)$ and 
where $C$ depends only on $l$, the genus of $M$ and $\eta$.
Furthermore, for $K>0$ sufficiently large (depending on $\eta$, the genus of $M$ and an upper bound $E_0$ for the initial energy) and for all $t_0\in [0,T)$ -- or for arbitrary $K>0$ and $t_0\in [0,T)$  sufficiently large --
we have
\beq\label{est:thm1:3}
\begin{aligned}
\sup_{t\in [t_0,T)}\|g(t)-h\|_{C^l([K(T-t_0)]\thick(M,g(t_0)),g(t_0))}\qquad
\\
\qquad +\sup_{t\in [t_0,T)}\|g(t)-h\|_{C^l([K(T-t_0)]\thick(\cu,h),h)}
&\leq 
C\frac{(E(t_0)-E(T))^\half}{K^\half}
\to 0,
\end{aligned}
\eeq
as $t_0\upto T$, where $C$ depends on $l$, the genus of $M$ and $\eta$. 
\end{thm}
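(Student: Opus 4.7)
The strategy is to view $g(t)$ as a \emph{horizontal curve} in the space of hyperbolic metrics, in the sense of \cite{RT-horizontal}, and derive everything from two inputs: its velocity is horizontal (a real part of a holomorphic quadratic differential with respect to $g(t)$), and this velocity is square-integrable in time. The former is immediate from $\pt g=\frac{\eta^2}{4}Re(P_g(\Phi(u,g)))$, and the energy identity \eqref{energy-identity} yields
$$\int_0^T\|\pt g\|_{L^2(M,g(t))}^2\dt\leq\eta^2(E(0)-E(T))<\infty.$$
Combined with the length bound $\ell_j(t)\leq C(T-t)(E(t)-E(T))=C\de(t)$ from Proposition \ref{prop:collar_desc}, which ties the pinching scale to our parameter $\de_K(t)/K$, these two facts will drive the whole argument.

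The heart of the proof is the quantitative bound \eqref{est1ofthm1}. For $t_1<t_2$ in $[t_0,T)$ I would write $g(t_2)-g(t_1)=\int_{t_1}^{t_2}\pt g\dtau$ and estimate pointwise in $C^l$ on $[\de_K(t_0)]\thick(M,g(t_0))$. The crucial ingredient is the horizontal-curve lemma, incorporated here as Lemma \ref{lemma:horiz2}, which controls the $C^l$ norm of a holomorphic quadratic differential on a $\de\thick$ region by $C(l)\de^{-\half}$ times its global $L^2$ norm. Combining with Cauchy--Schwarz in time and the energy identity then gives
\beqa
\|g(t_2)-g(t_1)\|_{C^l([\de_K(t_0)]\thick(M,g(t_0)),g(t_0))}
&\leq\frac{C(l)}{(K\de(t_0))^{\half}}\int_{t_1}^{t_2}\|\pt g\|_{L^2}\dtau\\
&\leq\frac{C\eta}{K^{\half}}\left[\frac{(t_2-t_1)(E(t_1)-E(t_2))}{\de(t_0)}\right]^{\half}.
\eeqa
Passing $t_2\upto T$ and bounding $(t_2-t_1)(E(t_1)-E(t_2))\leq\de(t_1)$ produces the claimed bound $CK^{-\half}[\de(t)/\de(t_0)]^{\half}$. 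The $h$-version in \eqref{est1ofthm1} then follows from the same argument applied on $(\cu,h)$, together with the $C^l$-equivalence of $g(t_0)$ and $h$ on the relevant thick part, which is itself a consequence of the bound just proved in the limit.

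The remaining assertions fall out quickly. For \eqref{eq:char-pinching} and \eqref{Uchar}: if $p\in[\de_K(t_0)]\thick(M,g(t_0))$ with $K\geq\bar K$ large enough, the $C^l$ bound above forces $\inj_{g(t)}(p)\to\inj_h(p)>0$ and places $p$ in $\cu$; conversely, if $p\in\cu$ then $\liminf_{t\upto T}\inj_{g(t)}(p)>0$ while $\de_K(t)\downto 0$, so $p\in[\de_K(t)]\thick(M,g(t))$ for $t$ close to $T$, yielding \eqref{Uchar}, and taking complements together with Proposition \ref{prop:collar_desc} then gives \eqref{eq:char-pinching}. For the sharper estimate \eqref{est:thm1:3} I would apply \eqref{est1ofthm1} with $K$ replaced by $K/(E(t_0)-E(T))$, so that the thickness parameter becomes $K(T-t_0)$, and then use monotonicity of $E$ and of $T-t$ to conclude $[\de(t)/\de(t_0)]^{\half}\leq 1$ uniformly in $t\in[t_0,T)$; the largeness hypothesis on $K$ (or $t_0$) is exactly what is needed to ensure that this rescaling keeps us in the regime where \eqref{est1ofthm1} applies. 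The main obstacle throughout is the uniform $C^l$-from-$L^2$ control on holomorphic quadratic differentials through the degeneration, together with the bookkeeping that transfers estimates between the three metrics $g(t_0)$, $g(t)$ and $h$ on their respective thick parts---content supplied by Lemma \ref{lemma:horiz2} and the framework of \cite{RT-horizontal}.
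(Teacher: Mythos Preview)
Your approach is essentially the paper's: treat $g$ as a finite-length horizontal curve, feed the Cauchy--Schwarz bound $\cl(t)^2\leq\eta^2(T-t)(E(t)-E(T))$ into the $C^l$-from-$L^2$ estimate on holomorphic quadratic differentials over $\de$-thick parts (packaged as Lemma \ref{lemma:horiz2} part \ref{part3_new_lemma}, or equivalently \eqref{est:g_minus_h} of Theorem \ref{thm:horizontal}), and then obtain \eqref{est:thm1:3} by applying the resulting bound with thickness $K(T-t_0)$, i.e.\ with $K$ replaced by $K/(E(t_0)-E(T))$, exactly as the paper does.

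Two small points of bookkeeping. First, the characterisation \eqref{eq:char-pinching}/\eqref{Uchar} is already the content of Lemma \ref{lemma:horiz2} part \ref{part1_new_lemma}, proved there directly from the injectivity-radius Lipschitz estimate \eqref{est:inj-weaker-copy2}; you do not need to deduce it from the $C^l$ bound, and in fact local $C^l$ convergence of the metric alone does not by itself prevent a short geodesic loop through $p$ from forming elsewhere (you would want Lemma \ref{lemma:horiz2} part \ref{part2_new_lemma} or \eqref{unif_conv} for that). Second, the reference to Proposition \ref{prop:collar_desc} is unnecessary (and in the paper's ordering that proposition is proved \emph{after} Theorem \ref{thm:1}); neither the length bound on $\ell_j$ nor anything else from it is used here.
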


\brmk
Although we do not require it here, one should be able to improve the smooth local convergence $u(t)\to\bar u$ of Theorem \ref{thm:basic_convergence}
to quantitative control on the size of $u(t)-\bar u$ over, say, the 
$[T-t]^\half\thick$ part of $(M,g(t))$, away from $S$, with respect to an appropriate weighted norm. 
\ermk

In summary we obtain that the flow \eqref{flow} decomposes any smooth map $u_0:M\to N$ into a collection of branched minimal immersions $v_i:\Si_i\to N$ through global solutions that are smooth away from finitely many times as follows:
As discussed earlier, at each singular time $t_m$ for which 
$\inj_{g(t)}(M)\nrightarrow 0$ as $t\upto t_m$, 
all of the lost energy is accounted for in terms of bubbles 
$\om_m^j:S^2\to (N,g_N)$, which we add to the collection of minimal immersions  $v_i$ (adding that same number of copies of $S^2$ to the collection of domains $\Si_i$).  
At singular times for which  $\inj_{g(t)}(M)\to 0$ 
the results discussed above apply and we add both the bubbles developing at the singular points $S\subset \cu$ and those that are disappearing down one of the degenerating collars to the set of minimal immersions $v_i$ (again adding the corresponding number of $S^2$'s to the collection of the $\Si_i$'s) and continue the 
flow on 
the closed lower genus surfaces $M_i$ as described above.

If the genus of any of the closed surfaces $M_i$ is $0$ or $1$, then its continuation will be a weak solution that flows forever afterwards according to the theory of the harmonic map flow \cite{Struwe85} or the theory in \cite{Ding-Li-Liu}.
If the genus of any of the surfaces $M_i$ is larger than $1$, then
the subsequent flow might develop a further finite-time singularity at which a collar degenerates, in which case we repeat the process above to continue the flow further still. Each time we restart the flow after a singularity caused by the degeneration of one or more collars, the genus of the surfaces underlying the continued flows will decrease, so repeating the process finitely many times gives us a global weak solution as desired. 
As the energy is conformally invariant, the resulting global solution has non-increasing energy and the total number of singular times $t_m$ is a priori bounded in terms of the genus, the initial energy and the target $(N,g_N)$.

We can relate the domain(s) and map(s) before a singular time $t_m$ to the flow(s) after the singular time as explained above and can thus reconstruct the initial map and the initial domain in terms of the map(s) and domain(s) at any time $t\in (t_m,t_{m+1})$
and the collection of all of the bubbles $v_i$ obtained at the singular times $t_1<..<t_m$ as well as connecting curves on 
cylinders. 

We can then apply the asymptotic analysis as discussed above (principally from \cite{HRT}) to each of the obtained global flows, eventually adding also the bubbles developing at infinite time as well as the limiting maps $u_j^\infty:M^\infty_j \to N$ obtained 
at infinite time,
which are
branched minimal immersions defined on surfaces of total genus no more than $\gamma$, to the collection of the $(\Si_i,v_i)$. 
This gives the decomposition of the initial map into 
branched minimal immersions $v_i:\Si_i\to N$ described earlier on.

This paper is organised as follows. In Section \ref{sect:metric} we carry out the analysis of the metric component of the flow, proving part \ref{1.1:1} of Theorem \ref{thm:basic_convergence} as well as Theorem \ref{thm:1} and Proposition \ref{prop:collar_desc}.
The resulting control on the evolution of the metric then allows us to analyse the map component in the subsequent Section \ref{sect:map}.
In Section \ref{sect:non-deg} we focus on the properties of the map on the non-degenerate part of the surface, stating and proving Proposition \ref{prop:conv-nondeg}, which yields both part \ref{1.1:2} of Theorem \ref{thm:basic_convergence}  as well as 
part \ref{part1} of Theorem \ref{thm:bubbles}. 
Parts \ref{part:energy_limit} and \ref{part_neck} of Theorem \ref{thm:bubbles} are then proven in Section \ref{sect:deg} where we analyse the map component on the degenerating part of the surface.

\emph{Acknowledgements:} 
The second author was supported by 
EPSRC grant number EP/K00865X/1.

\section{Analysis of the metric component}
\label{sect:metric}
In this section we analyse the metric component of the flow, proving first part \ref{1.1:1} of Theorem \ref{thm:basic_convergence}, then 
 Theorem \ref{thm:1}, and finally Proposition \ref{prop:collar_desc}. This analysis is based on the theory of general horizontal curves we developed in \cite{RT-horizontal}, some of which we recall here.

A \textit{horizontal} curve of metrics on a smooth closed oriented surface $M$ of genus at least $2$ is a 
smooth one-parameter family $g(t)$ of hyperbolic metrics on $M$ for $t$ within some interval $I\subset\R$ so that for each $t\in I$, there exists a holomorphic quadratic differential $\Psi(t)$ such that $\pl{g}{t}=Re(\Psi)$. 
This makes $g(t)$ move orthogonally to modifications by diffeomorphisms, as described in 
\cite{RT-horizontal}.

An important property of horizontal curves is that we can bound 
the $C^l$ norm of their velocity, $l\in\N$, in terms of a much weaker norm of $\pt g$ 
and the injectivity radius. In fact, \cite[Lemma 2.6]{RT-horizontal} gives that for any $x\in M$ and $l\in\N$
\beq 
|\pt{g}(t)|_{C^l(g(t))}(x)\leq C[\inj_{g(t)}(x)]^{-\half}\|\pt g (t)\|_{L^2(M,g(t))},
\eeq
$C$ depending only on the genus of $M$ and $l$,
where $|\Om|_{C^l(g)}(x):=\sum_{k=0}^l |\grad_{g}^{k} \Om|_g(x)$,
with $\grad_g$ the Levi-Civita connection, or its extension.

We furthermore recall that
for every point $x\in M$ the map $t\mapsto \inj_{g(t)}(x)$ is locally Lipschitz on the interval $I$ over which $g$ is defined (cf. \cite[Lemma 2.1]{RT-horizontal}) and that 
\beq \label{est:inj-weaker-copy2}
\bigg|\ddt \left[\inj_{g(t)}(x)\right]^{\half}\bigg|\leq K_0
\norm{\partial_t g(t)}_{L^2(M,g(t))}
\eeq 
holds true for a constant $K_0<\infty$ that depends only on the genus of $M$, see \cite[Lemma 2.2]{RT-horizontal}.

These estimates play an important role in the proof of the 
following convergence result for finite length horizontal curves, 
proven in \cite{RT-horizontal}, which we will use to analyse the metric component of the flow. 
In order to state this result, we introduce some more notation:
If $g(t)$ is defined for $t$ in some interval $[0,T)$, then we let 
$$\cl(s):=\int_s^T\|\pt g(t)\|_{L^2(M,g(t))}dt\in [0,\infty]$$
denote the length of the restriction of $g$ to the interval $[s,T)$.
 Given a tensor $\Om$ defined in a neighbourhood of 
some $K\subset M$, we write
\beq
\label{Ck_notation}
\|\Om\|_{C^l(K,g)}:= \sup_K |\Om|_{C^l(g)}.
\eeq

\begin{thm} (\cite[Theorem 1.2]{RT-horizontal})
\label{thm:horizontal}
Let $M$ be a closed oriented surface of genus $\ga\geq 2$,
and suppose $g(t)$ is a smooth horizontal curve in $\m_{-1}$, for $t\in [0,T)$,  
with finite length $\cl(0)<\infty$.
Then there exist a nonempty open subset $\cu\subset M$, whose complement has $k\in\{0,\ldots,3(\gamma-1)\}$ components, 
and
a complete hyperbolic metric $h$ on $\cu$ for which
$(\cu,h)$ is of finite volume and is conformally a finite disjoint union of closed Riemann surfaces 
(of genus strictly less than that of $M$ if $\cu$ is not the whole of M)
 with $2k$ punctures, such that
$$g(t)\to h$$
smoothly locally on $\cu$.
Moreover, defining $\ci:M\to[0,\infty)$ by
$$\ci(x)=\left\{
\begin{aligned}
& \inj_h(x) &\quad &  \text{on }\cu\\
& 0 & \quad &  \text{on }F=M\setminus  \cu,
\end{aligned}
\right.
$$
we have $\inj_{g(t)}\to \ci$ uniformly on $M$ as $t\upto T$, and indeed that
\beq
\label{unif_conv}
\left\|[\inj_{g(t)}]^\half-\ci^\half\right\|_{C^0}\leq K_0\cl(t)\to 0\qquad\text{as }t\upto T,
\eeq
where $K_0$ is chosen as in \eqref{est:inj-weaker-copy2} and depends only on $\ga$. 
Furthermore, for any $l\in\N$ and 
$\de>0$, if we take $t_0\in [0,T)$ sufficiently large so that 
\beq 
\label{ass:t0} 
(2K_0 \cl(t_0))^2< \de, \qquad K_0 \text{ the constant obtained in {\eqref{est:inj-weaker-copy2}} } 
\eeq
then $\de\thick (M,g(s))\subset \cu$ for every $s\in[t_0,T)$, and we have 
for every  $t\in [t_0,T)$
\beq
\label{est:g_minus_h}
\|g(t)-h\|_{C^l(\de\thick(\cu,h),h)}+\|g(t)-h\|_{C^l(\de\thick(M,g(s)),g(s))}\leq C\de^{-\half}\cl(t),
\eeq
where $C$ depends only on $l$ and $\ga$.
\end{thm}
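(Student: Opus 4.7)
The plan is to leverage the horizontal-curve estimate \eqref{est:inj-weaker-copy2} at two levels: first, at the level of the injectivity radius itself, to identify the limiting subset $\cu$ and establish the uniform convergence \eqref{unif_conv}; and then, in combination with the $C^l$-in-terms-of-$L^2$ bound on $\pt g$ recalled just before the statement, to upgrade this to smooth convergence of the metrics on every relatively thick region.

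Since $\cl(0)<\infty$, the Lipschitz bound \eqref{est:inj-weaker-copy2} makes $t\mapsto[\inj_{g(t)}(x)]^{1/2}$ Cauchy as $t\upto T$, uniformly in $x$, with modulus $K_0\cl(t)$. This immediately delivers a continuous limit $\ci^{1/2}\colon M\to[0,\infty)$ together with the explicit bound \eqref{unif_conv}. Setting $\cu:=\{\ci>0\}$ and $F:=M\setminus\cu$, nonemptiness of $\cu$ follows from the Margulis/collar lemma bound on the area of the thin part of $(M,g(t))$ together with Gauss--Bonnet: the $\de\thick(M,g(t))$ part has area bounded below by a positive constant for some $\de$ independent of $t$, which \eqref{unif_conv} transfers into $\cu$.

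The heart of the argument is the quantitative propagation of thickness along the curve. If $\inj_{g(t_0)}(x)\geq\de$ and $(2K_0\cl(t_0))^2<\de$, then \eqref{est:inj-weaker-copy2} forces $\inj_{g(t)}(x)\geq\de/4$ for every $t\in[t_0,T)$, so the $C^l$-in-terms-of-$L^2$ bound gives $|\pt g(t)|_{C^l(g(t))}(x)\leq C\de^{-1/2}\|\pt g(t)\|_{L^2(M,g(t))}$ throughout $\de\thick(M,g(t_0))$. Integrating in $t$ over any subinterval $[s_1,s_2]\subset[t_0,T)$ yields the Cauchy estimate $\|g(s_2)-g(s_1)\|_{C^l(\de\thick(M,g(t_0)),g(t_0))}\leq C\de^{-1/2}(\cl(s_1)-\cl(s_2))$, so $g(t)$ converges smoothly on each such set. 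Letting $s_2\upto T$ with $s_1=t$ and exhausting $\cu$ by taking $\de\downto 0$ and $t_0\upto T$ produces a smooth hyperbolic limit $h$ on all of $\cu$, and the same estimate passed to the limit gives \eqref{est:g_minus_h}.

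Finally I identify $(\cu,h)$ as a disjoint union of closed Riemann surfaces with $2k$ punctures. The metric $h$ has constant curvature $-1$ by $C^2$-convergence, and $\Vol(\cu,h)\leq-2\pi\chi(M)$ by Fatou. Completeness holds because any divergent geodesic ray in $\cu$ must eventually leave every compact subset and hence approach $F$, so by \eqref{unif_conv} its injectivity radius tends to zero and the Margulis lemma identifies its end with a standard hyperbolic cusp; uniformisation of finite-area complete hyperbolic surfaces then presents $(\cu,h)$ as a disjoint union of closed Riemann surfaces with finitely many punctures. The count $k\leq 3(\ga-1)$ comes from the fact that each component of $F$ is enclosed in a degenerating collar of $g(t)$ corresponding to one pinching simple closed geodesic, and a genus-$\ga$ surface admits at most $3(\ga-1)$ pairwise disjoint simple closed geodesics. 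I expect the principal obstacle to lie precisely in this last structural identification: one must verify that each component of $F$ produces exactly two cusps in $(\cu,h)$---either within the same connected component when the pinching curve is nonseparating, or one in each of two components when it is separating---and that no additional cusps appear; the latter is ruled out by \eqref{unif_conv}, which prevents new thin regions from being created in the limit.
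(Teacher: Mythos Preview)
This theorem is not proved in the present paper: it is quoted verbatim as \cite[Theorem 1.2]{RT-horizontal}, and the paper uses it as a black box in Section~\ref{sect:metric}. There is therefore no proof here to compare your proposal against.

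That said, your sketch is a reasonable outline of the argument one expects to find in \cite{RT-horizontal}, with one technical point glossed over. When you integrate the pointwise bound $|\pt g(t)|_{C^l(g(t))}(x)\leq C\de^{-1/2}\|\pt g(t)\|_{L^2}$ in $t$ to obtain $\|g(s_2)-g(s_1)\|_{C^l(\de\thick(M,g(t_0)),g(t_0))}\leq C\de^{-1/2}(\cl(s_1)-\cl(s_2))$, you are silently switching the reference metric from $g(t)$ to the fixed $g(t_0)$, and doing so at the level of $C^l$ norms (so for all covariant derivatives, not just the tensor itself). This requires knowing in advance that the metrics $g(t)$, $t\in[t_0,T)$, together with their Levi-Civita connections, are uniformly comparable on the thick part --- which is exactly what you are in the process of proving. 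The honest version of this step is a bootstrap/Gr\"onwall argument, and indeed the paper records it separately as the content of \cite[Lemma~3.2]{RT-horizontal} (see the proof of Lemma~\ref{lemma:horiz2} here). A second minor omission: you should check that your limit function $\ci$ actually equals $\inj_h$ on $\cu$; this follows from the smooth local convergence $g(t)\to h$ but is not automatic from the definition of $\ci$ as a uniform limit.
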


We first apply this result to prove the properties of the metric component claimed in our basic convergence result, i.e. part 
\ref{1.1:1} of Theorem \ref{thm:basic_convergence} 
To this end we first note that for any smooth solution $(u,g)$ of \eqref{flow} defined on $[0,T)$, $T<\infty$, the metric component is by definition a smooth  horizontal curve. Furthermore, its length is finite as 
\beqa
\label{est:L2-L}
\cl(t)^2 & =\left(\int_t^T\|\pt g(t)\|_{L^2(M,g(t))}dt\right)^2
\leq (T-t) \int_t^T\|\pt g(t)\|_{L^2(M,g(t))}^2dt\\
&\leq \eta^2(T-t) \left(E(t)-E(T)\right),
\eeqa
by \eqref{energy-identity}, where we abbreviate $E(t):=E(u(t),g(t))$ and $E(T):=\lim_{s\upto T}E(s)$.
In particular, defining 
\beq
\label{barKdef}
\bar K:= 5K_0^2 \eta^2,
\eeq
to depend only on $\eta$ and $\ga$,
and defining
\beq\label{def:delta_K} 
\de_K(t):=K (T-t)(E(t)-E(T)),
\eeq
which we will be considering for $K\geq \bar K$ and $t\in [0,T)$,
we have 
\beq
\label{basicK0Lest}
[K_0\cl(t)]^2 \leq \frac15 \de_{\bar K}(t)
\eeq
for all $t\in [0,T)$.
We may thus analyse the metric component $g$ of any solution of \eqref{flow} with the above Theorem \ref{thm:horizontal}. 

In the setting of Theorem \ref{thm:basic_convergence},
the assumption \eqref{ass:degen} that the metric component degenerates as $t$ approaches $T$ combined with the uniform convergence of the injectivity radius furthermore guarantees that the pinching set $F$ must be non-empty.

Part \ref{1.1:1} of Theorem \ref{thm:basic_convergence} concerning the local convergence of $g(t)$ to a limit $h$ and the properties of $h$, $\cu$ and $F$  is thus a direct consequence of Theorem \ref{thm:horizontal} and the fact that $\cl(0)<\infty$.

To prove the refined properties of the metric component stated in Theorem \ref{thm:1} and Proposition \ref{prop:collar_desc}
we shall  use the  following lemma, where $\bar K$ will be chosen as in \eqref{barKdef} above. 

\begin{lemma} \label{lemma:horiz2}
Let $(u,g)$ be a smooth solution of \eqref{flow} on $[0,T)$, $T<\infty$, on a surface $M$ of genus $\ga\geq 2$. 
Then there exists a constant $\bar K$ depending only on  $\eta$ and $\ga$
so that the following holds true. 
If we define $\de_K(t)$ as in \eqref{def:delta_K} then 
for every $t_0\in [0,T)$ the assumption \eqref{ass:t0} of Theorem \ref{thm:horizontal} is satisfied for $t_0$ and any $\de>0$ with $\de\geq \de_{\bar K}(t_0)$ and thus estimate
\eqref{est:g_minus_h} holds true for any $t_0\in[0,T)$,
$s,t\in [t_0,T)$,
and any $\de>0$ with $\de\geq \de_{\bar K}(t_0)$.
Furthermore 
\begin{enumerate} 
\item \label{part1_new_lemma} 
For every $K\geq \bar K$ the pinching set $F$ defined in \eqref{def:pinching-set} can be characterised by \eqref{eq:char-pinching}.
\item \label{part2_new_lemma} 
The metrics $(g(t))_{t\in [t_0,T)}$ are uniformly equivalent and their injectivity radii are of comparable size at points $x\in\de_{\bar K}(t_0)\thick (M,g(t_0))$ in the sense that  
for every $s,t\in [t_0,T)$
\beq
\label{est:equiv-metric}
g(s)(x)\leq C_1\cdot g(t)(x) \text{ and }C_1^{-1}\cdot h(x)\leq g(t)(x)\leq C_1 \cdot h(x)
\eeq
and
\beq \label{est:equiv-inj}
\inj_{g(s)}(x)\leq C_2\cdot \inj_{g(t)}(x)
\eeq
where $C_1\geq 1$ depends only on the genus of $M$, 
while $C_2\geq 1 $ is a universal constant.

\item \label{part3_new_lemma} 
For every $K\geq \bar K$, every $x\in \de_K(t_0)\thick (M,g(t_0))$, every $s,t\in [t_0,T)$ and every $l\in \N$ we have 
\beq \label{est:bamberg1} \abs{\pt g(t)}_{C^l(g(s))}(x)\leq C\delta_K(t_0)^{-1/2}\norm{\pt g(t)}_{L^2(M,g(t))}, \eeq
where $C$ depends only on $l$ and the genus of $M$.
\end{enumerate}

\end{lemma}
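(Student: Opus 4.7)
The plan is to extract all three parts from the combination of the length bound \eqref{est:L2-L} with two basic horizontal-curve estimates: the Lipschitz bound \eqref{est:inj-weaker-copy2} for $\inj^{1/2}$ and the $C^l$-versus-$L^2$ estimate from Lemma 2.6 of \cite{RT-horizontal}. Choosing $\bar K$ to depend only on $\eta$ and $\ga$ with $\bar K \geq 5 K_0^2 \eta^2$ immediately gives $[K_0 \cl(t)]^2 \leq \tfrac15 \de_{\bar K}(t)$, so that for any $\de \geq \de_{\bar K}(t_0)$ one has $(2K_0\cl(t_0))^2 \leq \tfrac45 \de < \de$, verifying the hypothesis \eqref{ass:t0} of Theorem \ref{thm:horizontal} and hence the applicability of \eqref{est:g_minus_h} at $t_0$ and any such $\de$. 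This is the common starting point for all three numbered parts.

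\textbf{Part 1 together with \eqref{est:equiv-inj}.} For the characterisation \eqref{eq:char-pinching} I would invoke the uniform convergence \eqref{unif_conv}: if $p\in F$ then $\ci(p)=0$ and so $[\inj_{g(t)}(p)]^{1/2}\leq K_0\cl(t)\leq \tfrac1{\sqrt 5}\de_K(t)^{1/2}$ for any $K\geq\bar K$, giving $\inj_{g(t)}(p)<\de_K(t)$ at every $t<T$; conversely $\de_K(t)\to 0$ forces $\liminf_{t\upto T}\inj_{g(t)}(p)=0$. Integrating \eqref{est:inj-weaker-copy2} in exactly the same way gives, for any $x\in \de_K(t_0)\thick(M,g(t_0))$ and $t\in[t_0,T)$, the key lower bound
\[
[\inj_{g(t)}(x)]^{1/2}\geq \de_K(t_0)^{1/2}-K_0\cl(t_0)\geq \bigl(1-\tfrac{1}{\sqrt 5}\bigr)\de_K(t_0)^{1/2},
\]
which will underpin both remaining parts. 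Comparing $[\inj_{g(s)}(x)]^{1/2}$ and $[\inj_{g(t)}(x)]^{1/2}$ via \eqref{est:inj-weaker-copy2} and this lower bound then yields \eqref{est:equiv-inj} with a universal $C_2$.

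\textbf{Part 3 and the metric equivalences of Part 2.} With the lower bound $\inj_{g(t)}(x)\geq c\,\de_K(t_0)$ just established, Lemma 2.6 of \cite{RT-horizontal} applied at time $t$ directly yields the $g(t)$-version $|\pt g(t)|_{C^l(g(t))}(x)\leq C\de_K(t_0)^{-1/2}\|\pt g(t)\|_{L^2}$ of \eqref{est:bamberg1}. To promote this to the $g(s)$-norm, and to produce the $g$-equivalence in \eqref{est:equiv-metric}, I would run a continuity/bootstrap argument: on any maximal subinterval of $[t_0,T)$ on which $g(\tau)$ and $g(t_0)$ are $2$-equivalent, the $g(\tau)$-bound (applied with $K=\bar K$) converts to a $g(t_0)$-bound on $\pt g(\tau)$, and integrating in $\tau$ gives
\[
\|g(t)-g(t_0)\|_{C^0(g(t_0))}\leq C\de_{\bar K}(t_0)^{-1/2}\cl(t_0)\leq C\eta/\sqrt{\bar K}.
\]
Enlarging $\bar K$ (still depending only on $\eta$ and $\ga$) so that this right-hand side lies below a fixed threshold like $1/2$ closes the bootstrap and produces the constant $C_1$ in \eqref{est:equiv-metric}. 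The $h$-comparison $C_1^{-1}h\leq g(t)\leq C_1 h$ then follows analogously from \eqref{est:g_minus_h} with $\de=\de_{\bar K}(t_0)$, since the same smallness $\|g(t)-h\|_{C^0}\leq C\eta/\sqrt{\bar K}$ holds on the relevant thick set. The main obstacle is precisely this last point: $\bar K$ must be fixed once and for all depending only on $\eta$ and $\ga$ so that it simultaneously controls the length-versus-thinness ratio, produces a positive $\inj$-lower bound, and absorbs the genus-dependent constants coming from Lemma 2.6 and \eqref{est:g_minus_h} to close the bootstrap for Part 2.
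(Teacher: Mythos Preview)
Your approach is essentially correct but differs from the paper's in one important respect: the paper does not redo the bootstrap. After verifying \eqref{ass:t0} exactly as you do (with the specific choice $\bar K=5K_0^2\eta^2$ from \eqref{barKdef}), the paper simply invokes Lemma~3.2 and Remark~3.5 of \cite{RT-horizontal} as black boxes: those results already give, for any horizontal curve and any $\de$ satisfying \eqref{ass:t0}, the metric equivalences \eqref{est:equiv-metric}, the injectivity-radius comparison \eqref{est:equiv-inj}, and the mixed-metric $C^l$ bound \eqref{est:bamberg1}. Part~\ref{part1_new_lemma} is handled via \eqref{unif_conv} and \eqref{basicK0Lest}, just as you do. So the paper's proof is very short; you are in effect re-deriving the content of \cite[Lemma~3.2]{RT-horizontal} from the more primitive Lemma~2.6 and \eqref{est:inj-weaker-copy2}.

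Two points to be aware of if you pursue your route. First, your bootstrap as written only yields $C^0$-equivalence of the metrics, but promoting $|\pt g(t)|_{C^l(g(t))}$ to $|\pt g(t)|_{C^l(g(s))}$ for $l\geq 1$ requires control on the $C^l$-difference $g(t)-g(s)$, not just $C^0$; you would need to iterate the bootstrap through each derivative level (which is exactly what \cite[Lemma~3.2]{RT-horizontal} does). Second, because you absorb the genus-dependent constant from Lemma~2.6 into $\bar K$ to close the bootstrap, your $\bar K$ is potentially larger than the paper's $5K_0^2\eta^2$; the paper avoids this enlargement precisely because the cited lemma already packages the bootstrap with the single hypothesis \eqref{ass:t0}. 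Neither point is fatal, but both explain why the paper's citation-based argument is cleaner.
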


\begin{proof}[Proof of Lemma \ref{lemma:horiz2}]
We first remark that the claims are trivially true if $\de_K(t_0)=0$ and hence 
$g\vert_{[t_0,T)}$ is constant in time, so we may assume without loss of generality that $\de_K(t_0)>0$.

Define $\bar K$ as in \eqref{barKdef}. Then \eqref{basicK0Lest}
implies that
\beq
\label{check_hyp}
(2K_0\cl(t_0))^2\leq \frac45 \de_{\bar K}(t_0)<\de_{\bar K}(t_0),
\eeq
and so \eqref{ass:t0} is satisfied for 
$\de\geq \de_{\bar K}(t_0)$ as claimed in the lemma.

To prove part \ref{part1_new_lemma} of the lemma we combine 
\eqref{unif_conv}  with 
\eqref{basicK0Lest} 
to obtain that  
$\inj_{g(t)}(p)\leq (K_0 \cl(t))^2 < \de_{\bar K}(t)$ for every $p\in F$ and every $t\in[0,T)$ and thus that
$$F \subset \bigcap_{t\in [0,T)} \de_K(t)\thin (M,g(t)) \text{
for any } K\geq \bar K.$$
As the reverse inclusion is trivially satisfied this establishes the characterisation \eqref{eq:char-pinching} of the pinching set for each $K\geq \bar  K$.

The proofs of parts \ref{part2_new_lemma} and \ref{part3_new_lemma} of the lemma
are now based on 
estimates on the velocity and the injectivity radius that were derived in \cite{RT-horizontal} for general horizontal curves under the same hypothesis that \eqref{ass:t0} holds true:
 Lemma 3.2 and Remark 3.5 of \cite{RT-horizontal} establish that 
 \eqref{est:equiv-metric} and \eqref{est:equiv-inj} 
hold true for arbitrary horizontal curves, times $s,t\in [t_0,T)$ and points $x\in \de\thick(M,g(t_0))$ provided $t_0$ and $\de$ are so that \eqref{ass:t0} is satisfied. Combined with \eqref{check_hyp} this immediately yields part \ref{part2_new_lemma} of the 
lemma. 
Finally, \eqref{est:bamberg1}, and hence part \ref{part3_new_lemma}
of the lemma follows immediately from
\cite[Lemma 3.2]{RT-horizontal}, with $\de$ there equal to 
$\de_{K}(t_0)$ here, because the hypotheses of that lemma are implied by \eqref{check_hyp}.
\end{proof}

Parts \ref{part2_new_lemma} and \ref{part3_new_lemma} of Lemma \ref{lemma:horiz2} will be used in the next section for the fine analysis of the map component,  but before that we complete the proofs of Theorem \ref{thm:1} and Proposition \ref{prop:collar_desc}.

\begin{proof}[Proof of Theorem \ref{thm:1}]
We let $\bar K$ be the constant obtained in Lemma \ref{lemma:horiz2},
i.e. given by \eqref{barKdef}, and set as usual $\de_K(t)=K(T-t)(E(t)-E(T))$.
For this choice of $\bar K$ the characterisation \eqref{eq:char-pinching} of the pinching set $F$ has already been proven in Lemma \ref{lemma:horiz2} and from this lemma we furthermore know that \eqref{ass:t0} holds true for any $t_0$ and any $\de\geq \de_{\bar K}(t_0)$ and thus in particular for $\de=\de_K(t_0)$, $K\geq \bar K$. Hence \eqref{est1ofthm1} follows from the corresponding estimate \eqref{est:g_minus_h} of Theorem \ref{thm:horizontal}
and the bound \eqref{est:L2-L} on $\cl(t)$.

It remains to prove \eqref{est:thm1:3}. For this we observe that 
 for $K>0$ sufficiently large 
and for all $t_0\in [0,T)$ -- or for arbitrary $K>0$ and $t_0\in [0,T)$  sufficiently large -- we can 
be sure that $\bar K(E(t_0)-E(T))\leq K$ and hence 
by \eqref{basicK0Lest} that 
\eqref{ass:t0} is satisfied for $t_0$ and $\de=K(T-t_0)$. 
This allows us to 
apply estimate \eqref{est:g_minus_h} of Theorem \ref{thm:horizontal} also for such values of $\de$ 
which then gives that 
\beqa
\label{h_est_lem_app2}
\sup_{t\in [t_0,T)}\|g(t)-h\|_{C^l([K(T-t_0)]\thick(M,g(t_0)),g(t_0))}
&\leq 
C\frac{\cl(t_0)}{K^\half(T-t_0)^\half}\\
&\leq 
C\frac{(E(t_0)-E(T))^\half}{K^\half}
\to 0,
\eeqa
as $t_0\upto T$, using \eqref{est:L2-L}, as well as that 
\beq
\sup_{t\in [t_0,T)}\|g(t)-h\|_{C^l([K(T-t_0)]\thick(\cu,h),h)}\leq 
C\frac{(E(t_0)-E(T))^\half}{K^\half}
\to 0,
\eeq
where $C$ depends only on $l$, $\eta$ and the genus of $M$.
This completes the proof of Theorem \ref{thm:1}.

\end{proof}

\begin{proof}[Proof of Proposition \ref{prop:collar_desc}]
The uniform convergence of the injectivity radius  follows from Theorem \ref{thm:horizontal} as  $(g(t))_{t\in [0,T)}, \, T<\infty$, is a horizontal curve of finite length.

We furthermore recall  that standard results from the theory of hyperbolic surfaces give that for any $\de<\arsinh(1)$ the $\de\thin$ part of a hyperbolic surface is given by the union of disjoint subcollar regions around the simple closed geodesics of length $\ell<2\de$ as
described in the proposition and refer to the appendix of \cite{RT3} as well as the references therein  for further details.

For $K\geq K_0$, $K_0$ as in \eqref{est:inj-weaker-copy2},
we define closed sets
$$F_K(t):=\{p: \inj_{g(t)}(p)\leq (K\cl(t))^2\},$$ 
for $t\in [0,T)$.
It follows from the slightly stronger result \cite[Lemma 3.1]{RT-horizontal} that 
the sets $F_K(t)$ are nested, becoming only smaller as $t$ increases, and that 
the pinching set $F$ can be written as
\beq
\label{anotherFchar}
F=\bigcap_{t\in [0,T)}F_K(t).
\eeq
It is useful for us to appeal to this fact for some $K>K_0$, and we choose $K=2K_0$.

Thus for $t_0$ sufficiently large, chosen in particular so that $(2K_0\cl(t_0))^2<\arsinh(1)$, the pinching set 
$F$ has  the same number $k\in \{1,...,3(\gamma-1)\}$  of connected components as the
sets $F_{2K_0}(t) $, $t\in [t_0,T)$, with the connected components of $F_{2K_0}(t) $ being disjoint closed subcollars around geodesics $\si_j(t)$ of length $\ell_j(t)\leq 2(2K_0\cl(t))^2\leq C(T-t)(E(t)-E(T))$ whose interior is as described in the proposition.
In particular given any $\de \in (0,\arsinh(1))$ and  $t\in [t_0,T)$ sufficiently large (depending in particular on $\de$), 
we know that the connected components $F_j$ of the pinching set are contained in the corresponding subcollar $\Col_j(t,\de)$ as claimed in the proposition.

It thus remains to show that there exists a number $\de_0\in (0,\arsinh(1))$ so that any simple closed geodesic in $(M,g(t))$, $t\in [t_0,T)$, that does not coincide with one of the $\si_j(t)$ 
must have length at least $2\de_0$. To this end we observe  that
the characterisation \eqref{anotherFchar} this time with $K=K_0$ gives
$$\Om:=(2K_0\cl(t_0))^2\thick (M,g(t_0))\subset
M\setminus F_{K_0}(t_0)\subset \cu$$ 
and since $\Om$ is closed, it is a compact subset of $M$ and hence also of\, $\cu$.
Therefore over $\Om$ the injectivity radius $\inj_{g(t)}(\cdot)$, $t\in[t_0,T)$, is bounded uniformly from below by some constant $\de_0\in (0,\arsinh(1))$ thanks to \eqref{conv:inj}. 
Consequently, any simple closed geodesic in $(M,g(t))$ that enters $\Om$ must have length at least $2\de_0$.

The only alternative is that the simple closed geodesic 
in $(M,g(t))$ is  fully contained in one of the $k$ cylinders $\Col_j(t_0,(2K_0\cl(t_0))^2) $ 
in which case it must be homotopic to $\si_j(t)$ (up to change of orientation) and hence coincide with $\si_j(t)$.
\end{proof}

\section{Analysis of the map component}
\label{sect:map}

The challenges of analysing the map component are of a different nature depending on whether we consider a region where the metric has already settled down or a region in a collar that will ultimately degenerate.
Roughly speaking, on the non-degenerate part of the surface we control the metric but cannot hope to bound the tension while on the degenerating part of the surface the metric is not controlled but the tension tends to zero when computed with respect to the flat metric in collar coordinates along a sequence of times $t_n\upto T$ as considered in Theorem \ref{thm:bubbles}.
We will analyse the map component separately on these two different regions, with the analysis on the non-degenerate part, and hence the proofs of part \ref{1.1:2} of Theorem \ref{thm:basic_convergence} and of
part \ref{part1} of Theorem \ref{thm:bubbles}, carried out in Section \ref{sect:non-deg}. Parts \ref{part:energy_limit} and \ref{part_neck} of Theorem \ref{thm:bubbles}, which concern the part of the map that is lost on degenerating collars, are then proven in Section \ref{sect:deg}. In both of these sections we use a
local energy estimate that is derived in Section \ref{sect:energy-est}

\subsection{Local energy estimates}
\label{sect:energy-est}

The goal of this section is to prove the following lemma on the evolution of cut-off energies 
 \beq \label{def:cut-energy} 
 E_\vph(t):=\frac12\int\vph^2\abs{du(t)}_{g(t)}^2 dv_{g(t)},
 \eeq
for functions $\varphi\in C^\infty(M,[0,1])$.

\begin{lemma}[Local energy estimate] 
\label{lemma:cut-energy}
Let  $(u,g)$ be a smooth solution of \eqref{flow} on a closed surface of genus at least $2$, and for an interval $[0,T)$, $T<\infty$, and let 
 $\varphi\in C^\infty(M,[0,1])$ be such that there exists $t_0\in [0,T)$  and $K\geq \bar K$, for $\bar K$ the constant obtained in Lemma \ref{lemma:horiz2}, so that 
\beq \label{ass:cut-off} \supp(\vph)\subset \delta_K(t_0)\thick (M,g(t_0)),\eeq 
where as usual $\de_K(t):= K(T-t)(E(t)-E(T)).$

Then the limit $\lim_{t\upto T} E_\vph(t)$ exists and (assuming the flow is not constant in time on $[t_0,T)$)  for any  $t_0\leq t< s<T$ we have
\beqa 
\label{est:Ephi-new}
\abs{E_\vph(t)-E_\vph(s)}
&\leq  E(t)-E(s)+C\big[\de_K(t_0)^{-\frac12}+\norm{d\vph}_{L^\infty(M,g(t_0))}
\big] (s-t)^{\half} (E(t)-E(s))^\half\\
&\leq  E(t)-E(T)+C\big[\de_K(t_0)^{-\frac12}+\norm{d\vph}_{L^\infty(M,g(t_0))}
\big] (T-t)^{\half} (E(t)-E(T))^\half
\eeqa
where $C$ depends only on the coupling constant $\eta$, the genus of $M$ and an upper bound $E_0$ for the initial energy.
\end{lemma}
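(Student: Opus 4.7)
The plan is to differentiate $E_\varphi(t)$ in time, decompose the result into a "good" term (controlled by the energy dissipation $-\frac{dE}{dt}$) plus "error" terms that involve $\partial_t g$ and $d\varphi$, and then integrate and apply Cauchy--Schwarz. The existence of the limit $\lim_{t\upto T}E_\varphi(t)$ will come out of the same estimate by showing $E_\varphi$ is Cauchy as $t\upto T$. Throughout, the decisive input is Lemma \ref{lemma:horiz2}: part \ref{part3_new_lemma} will convert $L^2$-control of $\partial_t g$ into $L^\infty$-control on $\supp(\varphi)$, while part \ref{part2_new_lemma} will let us compare norms with respect to $g(t)$ and $g(t_0)$ freely on the thick set containing $\supp(\varphi)$.

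Concretely, differentiating under the integral using $\partial_t u=\tau_g(u)$ and writing the variation of $|du|^2_g\, dv_g$ with respect to $g$ in terms of the (trace-free) stress--energy tensor $T_{ij}=\langle\partial_i u,\partial_j u\rangle-\tfrac12 g_{ij}|du|_g^2$ gives, after integration by parts in the first term,
\begin{equation*}
\tfrac{d}{dt}E_\varphi=-\int_M\varphi^2|\partial_t u|^2\,dv_g-2\int_M\varphi\,\langle d\varphi\otimes du,\partial_t u\rangle\,dv_g-\tfrac12\int_M\varphi^2\langle T,\partial_t g\rangle\,dv_g.
\end{equation*}
Since $|T|\leq C|du|_g^2$ and $\int\varphi^2|du|_g^2\,dv_g=2E_\varphi\leq 2E_0$, and since $\supp(\varphi)\subset\delta_K(t_0)\thick(M,g(t_0))$, Lemma \ref{lemma:horiz2}\partref{part3_new_lemma} (with $l=0$ and $s=t$) yields
\begin{equation*}
\sup_{\supp\varphi}|\partial_t g(t)|_{g(t)}\leq C\,\delta_K(t_0)^{-1/2}\,\|\partial_t g(t)\|_{L^2(M,g(t))},
\end{equation*}
so the third term is bounded by $C\,E_0\,\delta_K(t_0)^{-1/2}\|\partial_t g\|_{L^2}$. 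For the second term I will use Cauchy--Schwarz, together with Lemma \ref{lemma:horiz2}\partref{part2_new_lemma} to bound $|d\varphi|_{g(t)}\leq C|d\varphi|_{g(t_0)}$ on $\supp(\varphi)$, producing a bound of $C\,E_0^{1/2}\|d\varphi\|_{L^\infty(M,g(t_0))}\|\partial_t u\|_{L^2}$. The first term is $\leq\|\partial_t u\|_{L^2}^2$.

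Integrating from $t$ to $s$, the first term contributes $\int_t^s\|\partial_t u\|_{L^2}^2\,d\sigma\leq E(t)-E(s)$ by the energy identity \eqref{energy-identity}. For the remaining two terms I apply Cauchy--Schwarz in time together with \eqref{energy-identity} in the forms $\int_t^s\|\partial_t u\|_{L^2}^2\leq E(t)-E(s)$ and $\int_t^s\|\partial_t g\|_{L^2}^2\leq\eta^2(E(t)-E(s))$, giving
\begin{equation*}
\int_t^s\|\partial_t u\|_{L^2}\,d\sigma\leq(s-t)^{1/2}(E(t)-E(s))^{1/2},
\end{equation*}
and similarly for $\int_t^s\|\partial_t g\|_{L^2}\,d\sigma$ up to a factor of $\eta$. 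Collecting these bounds produces exactly the first line of \eqref{est:Ephi-new}, with $C$ depending on $\eta$, $\ga$ and $E_0$; the second line follows since $E(\cdot)$ is nonincreasing and $s<T$. Finally, applying the first inequality with $t,s$ both close to $T$ shows that $E_\varphi(t)$ satisfies the Cauchy criterion as $t\upto T$, so the limit exists.

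The main (mild) obstacle is organising the pointwise bounds on $\supp(\varphi)$: we need uniform $L^\infty$ control of $\partial_t g$ and comparability of metrics simultaneously for all $t\in[t_0,T)$, and this is precisely what Lemma \ref{lemma:horiz2} provides once one knows that $\supp(\varphi)\subset\delta_{\bar K}(t_0)\thick(M,g(t_0))$, i.e. once the condition $K\geq\bar K$ is imposed. The algebraic parts of the computation are then routine.
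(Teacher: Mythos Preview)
Your proof is correct and follows essentially the same route as the paper. The paper separates the computation into an intermediate lemma (Lemma \ref{lemma:loc-energy}) that records the bound on $\frac{d}{dt}E_\varphi$ before invoking the horizontal-curve estimates, and then combines with Lemma \ref{lemma:horiz2} in the proof of Lemma \ref{lemma:cut-energy}; you merge these two steps, but the differentiation, the use of trace-freeness of $\partial_t g$, the pointwise bounds from Lemma \ref{lemma:horiz2}, and the final Cauchy--Schwarz in time are all the same.
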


A first step in the proof of Lemma \ref{lemma:cut-energy} is to show the following analogue of well-known local energy estimates for harmonic map flow as found e.g. in  \cite[Section 2]{finwind}.

\begin{lemma}
\label{lemma:loc-energy}
 Let $(u,g)$ be a (smooth) solution of \eqref{flow} on $[0,T)$ and let $\varphi\in C^\infty(M,[0,1])$ be
 any given function.
 Then the evolution of the cut-off energy $E_\vph(t)$ defined in \eqref{def:cut-energy} 
 is controlled by 
 \beqa \label{est:Ephi-1}
  \bigg|\tfrac{d}{dt} E_\vph +\int \vph^2\abs{\tau_g(u)}^2 dv_g\bigg|
  &\leq 
  2\sqrt{2} E(u,g)^{1/2} \norm{d\vph}_{L^\infty(M,g)}
   \big(\int \vph^2\abs{\tau_g(u)}^2 dv_g\big)^{1/2}\\
  &\qquad + \norm{\pt g}_{L^\infty(\text{supp}(\vph),g)} E_\vph
  .
  \eeqa

\end{lemma}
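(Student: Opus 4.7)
The plan is to differentiate $E_\vph(t)$ directly in time, using the evolution equations \eqref{flow}, and to identify the three contributions: one from the evolution of $u$ (which produces the leading $-\int\vph^2|\tau_g(u)|^2\,dv_g$ via integration by parts), one from the change of the metric appearing in $|du|_g^2$, and one from the change of the volume form $dv_g$. Since $\vph$ is independent of $t$, these are the only sources of time dependence.

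The key observation that simplifies everything is that $\pt g=\tfrac{\eta^2}{4}\operatorname{Re}(P_g(\Phi(u,g)))$ is the real part of a holomorphic quadratic differential, and as such is \emph{trace-free} with respect to $g$, so $g^{ij}\pt g_{ij}=0$ pointwise. Consequently $\pt(\sqrt{\det g})=0$ and the volume form contributes nothing: $\pt(dv_g)=0$. Thus
\beqa
\pl{}{t}E_\vph=\frac12\int\vph^2\pt(|du|_g^2)\,dv_g=\int\vph^2 g^{ij}g_N(\nabla_i\pt u,\dop_j u)\,dv_g\; -\; \frac12\int\vph^2\langle\pt g,du\otimes du\rangle_g\,dv_g,
\eeqa
where in the second integral $\langle\cdot,\cdot\rangle_g$ denotes the natural contraction of a symmetric $2$-tensor with $du\otimes du$. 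Call these $I_2$ and $I_1$. The term $I_1$ is controlled pointwise by $|\pt g|_g|du|_g^2$, so
\beq
\abs{I_1}\leqs \|\pt g\|_{L^\infty(\supp(\vph),g)}\,E_\vph,
\eeq
which matches the second term on the right-hand side of \eqref{est:Ephi-1}.

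For $I_2$, substitute the evolution equation $\pt u=\tau_g(u)$ and write $V:=\tau_g(u)\in\Gamma(u^*TN)$. The first variation formula for the energy, applied to the variation $\vph^2 V$, gives
\beq
\int\langle\nabla(\vph^2 V),du\rangle_g\,dv_g=-\int\vph^2 g_N(V,\tau_g(u))\,dv_g=-\int\vph^2|\tau_g(u)|^2\,dv_g.
\eeq
Expanding $\nabla(\vph^2 V)=2\vph\,d\vph\tensor V+\vph^2\nabla V$ in the integrand separates $I_2$ from a cross-term, yielding
\beq
I_2=-\int\vph^2|\tau_g(u)|^2\,dv_g\;-\;2\int\vph\,\langle d\vph\tensor\tau_g(u),du\rangle_g\,dv_g,
\eeq
so that the targeted quantity is
\beq
\pl{}{t}E_\vph+\int\vph^2|\tau_g(u)|^2\,dv_g=I_1-2\int\vph\,\langle d\vph\tensor\tau_g(u),du\rangle_g\,dv_g.
\eeq

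The remaining cross-term is handled by Cauchy--Schwarz: using the pointwise bound $|\langle d\vph\tensor\tau_g(u),du\rangle_g|\leqs|d\vph|_g|\tau_g(u)||du|_g$ together with $\int|du|_g^2\,dv_g=2E(u,g)$,
\beq
\bigl|2\smallint\vph\,\langle d\vph\tensor\tau_g(u),du\rangle_g\,dv_g\bigr|\leqs 2\|d\vph\|_{L^\infty(M,g)}\Bigl(\int\vph^2|\tau_g(u)|^2\,dv_g\Bigr)^{1/2}(2E(u,g))^{1/2},
\eeq
which is exactly the first term on the right-hand side of \eqref{est:Ephi-1}. Combining this bound with the estimate for $I_1$ finishes the proof.

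There is no real obstacle: the only delicate point is recognising that $\pt g$ is trace-free (so the volume form is preserved), after which the argument is a standard integration-by-parts combined with Cauchy--Schwarz.
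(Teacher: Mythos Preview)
Your proof is correct and follows essentially the same approach as the paper: both use that $\pt g$ is trace-free (so $dv_g$ is time-independent), isolate the metric-variation term bounded by $\|\pt g\|_{L^\infty(\supp\vph,g)}E_\vph$, and obtain the main identity plus cross-term via integration by parts and Cauchy--Schwarz. The only cosmetic difference is that the paper works extrinsically (multiplying the embedded equation \eqref{eq:evol-u} by $\vph^2\pt u$ and integrating by parts directly), whereas you invoke the first variation formula for the energy with the test section $\vph^2\tau_g(u)$; these are the same computation.
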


\begin{proof}
The equation of the map component can be described by 
 \beq\label{eq:evol-u} 
 \pt u-\Delta_gu=A_g(u)( du, du)=g^{ij}A(u)(\partial_{x_i} u,\partial_{x_j} u)\perp T_uN\eeq
 if we view 
 $(N,g_N)$ as a submanifold of Euclidean space using Nash's embedding theorem and denote by $A$  the second fundamental form of $N\hookrightarrow \R^K$. We multiply this equation
 with $\vph^2 \pt u$ and integrate over $(M,g)$ to obtain
  $$
 0 
 =\int\vph^2\abs{\pt u}^2 dv_g+\int \langle \pt d u, d u\rangle_g \vph^2 dv_g+\pt u \langle du,d(\vph^2)\rangle_g dv_g
 .$$
 We now recall that $\pt g$ is given as the real part of a quadratic differential and thus has zero trace, which implies that
$\frac{d}{dt}dv_g=0$. As $\vph$ is independent of time while $\pt u=\tau_g(u)$ we thus obtain
\beqa 
\bigg|\tfrac{d}{dt}  E_\vph+\int\vph^2\abs{\tau_g(u)}^2 dv_g\bigg|&\leq 
\tfrac12\tfrac{d}{d\eps}\vert_{\eps=0}\int \abs{du}_{g(t+\eps)}^2\vph^2 dv_g  + 2\int \vph\abs{d\vph}\cdot \abs{\tau_g(u)}\cdot \abs{du} dv_g 
\\
  &\leq \norm{\pt g}_{L^\infty(\supp(\vph),g)}E_\vph \\
  & \quad+ 2 \norm{d\vph}_{L^\infty(M,g)}(2E(u,g))^{1/2}\cdot  \bigg(\int \vph^2\abs{\tau_g(u)}^2 dv_g\bigg)^{1/2}
  \eeqa
as claimed.
\end{proof}

Based on this lemma as well as the control on the metric on the $\de_K(t_0)\thick$ part of the domain obtained in Lemma \ref{lemma:horiz2}, we can now prove our main energy estimate.

\begin{proof}[Proof of Lemma \ref{lemma:cut-energy}]
Given $K\geq \bar K$, with $\bar K$ as in Lemma \ref{lemma:horiz2}, we set as usual $\de_K(t):=K(T-t)(E(t)-E(T))$ and consider a cut-off function $\vph$ as in the lemma for which  \eqref{ass:cut-off} is satisfied for some $t_0$. 

This assumption on the support of $\vph$ allows us to bound any 
$C^l$ norm of $\pt g$ on $\supp(\vph)$ using estimate \eqref{est:bamberg1} of Lemma \ref{lemma:horiz2}, which implies in particular that  
\beq 
\norm{\pt{g}(t)}_{L^\infty(\supp \varphi,g(t))} \leq C[\de
_K(t_0)]^{-\half}\|\pt g(t)\|_{L^2(M,g(t))}, \text{ for any } t\in [t_0,T)
\eeq
holds true with a constant $C$ that depends only on the genus. 

Furthermore, the equivalence \eqref{est:equiv-metric} of the metrics on $\de_{\bar K}(t_0)\thick(M,g(t_0))$, and thus in particular on $\supp(\vph)$, obtained in the same lemma allows us to bound
 $$\norm{d\vph}_{L^\infty(M,g(t))}\leq \sqrt{C_1} \norm{d\vph}_{L^\infty(M,g(t_0))} \text{ for  } t\in [t_0,T).$$
The local energy estimate \eqref{est:Ephi-1} of Lemma \ref{lemma:loc-energy} thus reduces to
\beqa  
\label{est:Ephi_proof}
  \abs{\tfrac{d}{dt} E_\vph}
  \leq & \norm{\tau_g(u)}_{L^2(M,g)}^2
  +C\norm{d\vph}_{L^\infty(M,g(t_0))}\norm{\tau_g(u)}_{L^2(M,g)} 
  +C\delta_K(t_0)^{-\half} \norm{\pt g}_{L^2(M,g)} \\
  \leq &\big( -\tfrac{dE}{dt}\big) +C\cdot \big[\norm{d\vph}_{L^\infty(M,g(t_0))}+\delta_K(t_0)^{-\half}\big] \big(-\tfrac{dE}{dt}\big)^\half
\eeqa
for $t\in [t_0,T)$,
where the constant $C$ now depends not only on the genus but also on the coupling constant and an upper bound $E_0$ on $E(0)\geq E(t)$ and where we used the evolution equation \eqref{energy-identity} of the total energy in the second step. 

  Integrating \eqref{est:Ephi_proof} over $[t,s]\subset [t_0,T)$ yields the claim of Lemma \ref{lemma:cut-energy}. 
\end{proof}

Lemma \ref{lemma:cut-energy} will allow us to determine both the part of the degenerating collars where energy can be lost as well as the scale at which energy concentrates around points in the singular set $S\subset \cu$. This will then allow us to capture two collections of bubbles, one 
developing at the bubble points $S$, but also an additional collection that are disappearing down the collars. By further analysing what can happen between these bubbles, this will allow us to prove Theorem \ref{thm:bubbles}.

This bubbling analysis will be carried out along a sequence of times $t_n$ for which \eqref{tau_control} holds. The existence of such $t_n\upto T$ follows by a standard argument: Integrating \eqref{energy-identity} in time
implies that 
$$\int_0^T \|\tau_g(u)\|_{L^2}^2 dt
\qquad\text{ and }\qquad
\int_0^T \|P_g(\Phi)\|_{L^2}^2 dt$$
are bounded in terms of an upper bound $E_0$ for the initial energy, and (for the second integral) the coupling constant $\eta$.
Here we suppress the dependence of $\Phi$ on $u$ and $g$, of the $L^2$ measure on $g$, and of $u$ and $g$ on $t$.
These bounds imply that whenever a smooth function $f:[0,T)\to [0,\infty)$ has infinite integral, there exists a sequence of times $t_n\upto T$ such that 
$$\left[\|\tau_g(u)\|_{L^2}^2+\|P_g(\Phi)\|_{L^2}^2\right](t_n)<f(t_n).$$
In particular, we may always choose some sequence $t_n\upto T$ so that 
\eqref{tau_control} holds true for $t_n$ (and thus also for any subsequence that we take later).

\subsection{Analysis of the map component on the non-degenerate part of the surface}
\label{sect:non-deg}
On compact subsets of $\cu$ we can control the metric component using Lemma \ref{lemma:horiz2} and may thus think of the evolution equation \eqref{flow} for the map component as a solution of a flow that is akin to the classical harmonic map flow albeit with a (well controlled) time dependent metric. This will allow us to adapt well-known techniques from the theory of the harmonic map flow, in particular from \cite{Struwe85} and \cite{finwind}, to analyse the solution on this part of the domain in detail: We 
 prove that as $t\upto T$ energy concentrates only at finitely many points $S$ away from which the maps converge in $C^l$, for each $l\in\N$, and that, along a subsequence of times $t_n\upto T$ as in \eqref{tau_control}, we can extract a finite number of bubbles at each point in $S$ which account for all the energy that is lost near these point. This last part is equivalent to proving that no energy is lost on so-called neck-regions around the bubbles (not to be confused with collar regions around the degenerating geodesics). This fine analysis of the map component on the thick part of the surface applies not only in the case of a finite time degeneration as considered in the present paper but (as a by-product of the following proposition) also gives refined information at singular times as considered in \cite{Rexistence} across which the metric remains controlled.

\begin{prop}[cf. \cite{finwind}]
\label{prop:conv-nondeg}
Let $(u,g)$ be any smooth solution of \eqref{flow} for $t\in[0,T)$ on a surface of genus at least $2$. Let $F$ be the (possibly empty) set given by \eqref{def:pinching-set} and let $S$ be defined as in 
\eqref{def:S}.
Then $S$ is a finite set and 
 \begin{enumerate}
  \item \label{part-1-prop}$u(t)\text{ converges smoothly locally on } M\setminus (F\cup S)$ and weakly locally in $H^1$ on $M\setminus F$ as $t\upto T$,
to a limit that we denote $u(T)$.
  \item \label{part-2-prop} 
  We have no loss of energy at points in $S$, and the scales of bubbles developing at the points of $ S$ (along a subsequence of times $t_n\upto T$ as in \eqref{tau_control}) are small compared with 
$(T-t_n)^\half$. Indeed, if $\om_1,\ldots,\om_{m'}$ are the bubbles developing at $x\in S$ then for every $\nu>0$
\beqa \label{eq:energy-id-balls}\lim_{r\downto 0}\lim_{t\upto T}E(u(t),g(t),B_{h}(x,r))&= \lim_{r\downto 0}\lim_{t\upto T}E(u(t),g(t),B_{g(t)}(x,r))
\\
&=
\lim_{t\upto T}E(u(t),g(t),B_{g(t)}(x,\nu(T-t)^\half))
=\sum_{l=1}^{m'} E(\om_l).\eeqa
In particular, if $\om_1,\ldots,\om_{m''}$ is the complete list of bubbles developing at points in $S$ and if $\Om\subset\subset \cu$ 
is chosen large enough so that $S$ is contained in the interior of $\Om$ then 
\beqa \label{eq:energy_id_compact}
\lim_{t\upto T} E(u(t),g(t),\Om)= E(\bar u, h, \Om) + \sum_{l=1}^{m''} E(\om_l).
\eeqa
   \end{enumerate}
\end{prop}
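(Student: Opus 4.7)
The plan is to adapt the classical analysis of finite-time singularities of the harmonic map flow, developed by Struwe \cite{Struwe85} and refined in \cite{finwind}, to our setting. The essential observation is that on compact subsets of $\cu$, Theorem \ref{thm:1} and Lemma \ref{lemma:horiz2} give uniform $C^l$-control on $g(t)$, $\pt g$, and a uniform positive lower bound on the injectivity radius, so the evolution equation $\pt u=\tau_g(u)$ behaves as the standard harmonic map flow with a well-controlled, time-dependent background metric, and the classical $\ep$-regularity machinery carries over.

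First I would establish a local $\ep$-regularity statement: there exists $\ep_0>0$ such that if $x_0\in \cu$, $B_h(x_0,2r)\ssubset \cu$, and $\sup_{t\in[t_1,T)}E(u(t),g(t),B_h(x_0,2r))<\ep_0$, then $u(t)$ is bounded in $C^l(B_h(x_0,r))$ uniformly for $t$ near $T$. Combined with the energy bound $E(t)\leq E_0$, this forces the set $S$ to be finite, as each $x\in S$ costs at least $\ep_0$ of concentrated energy. Away from $F\cup S$, $\ep$-regularity together with the local energy estimate \eqref{est:Ephi-new} of Lemma \ref{lemma:cut-energy} (in the difference form, which shows that the energy in fixed compact sets cannot oscillate) upgrades the subconvergence to actual $C^l_{\mathrm{loc}}(M\setminus(F\cup S))$-convergence to a limit $u(T)$; the uniform energy bound then gives weak $H^1$-convergence on all of $M\setminus F$, proving part \ref{part-1-prop}.

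For part \ref{part-2-prop} I would fix $x\in S$ and a sequence $t_n\upto T$ satisfying \eqref{tau_control}, work in $h$-geodesic coordinates around $x$ (which are essentially equivalent to $g(t_n)$-coordinates by Lemma \ref{lemma:horiz2}), and run the standard Struwe--Ding--Tian blow-up procedure of \cite{Struwe85,finwind}, using the $L^2$-smallness of $\tau_g(u(t_n))$ to ensure the rescaled limits are harmonic and hence extend to bubbles $\om_l\colon S^2\to N$. To upgrade this to the refined scale bound that all bubble scales are $o((T-t_n)^{1/2})$, I would apply \eqref{est:Ephi-new} with a cut-off $\vph$ equal to $1$ on $B_h(x,\nu(T-t_n)^{1/2})$ and vanishing outside the ball of twice that radius, so $\|d\vph\|_{L^\infty}\sim (T-t_n)^{-1/2}$: the right-hand side then reduces to $O\bigl((E(t)-E(T))^{1/2}\bigr)\to 0$, which forces any concentration to happen strictly below scale $(T-t_n)^{1/2}$ and simultaneously yields the last equality in \eqref{eq:energy-id-balls}.

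The main obstacle is the no-loss-of-energy statement in \eqref{eq:energy-id-balls}, i.e.\ ruling out energy disappearing on the neck regions between the base point, the bubbles, and between nested bubbles. This is the content of \cite[Theorem 1.6]{finwind} in the case $M=S^2$ and is proven there by decomposing the bubble-formation region into dyadic annular necks and combining a Pohozaev-type identity with \eqref{est:Ephi-new} and the $L^2$-smallness of $\tau_g(u(t_n))$, where precisely the factor $(T-t)^{1/2}$ in \eqref{tau_control} provides the scaling-critical decay that closes the argument. In our situation this argument applies essentially verbatim after pulling back to a fixed chart around $x$ and using that $g(t)\to h$ smoothly on $B_h(x,r)\setminus\{x\}$ with uniform control on the whole ball. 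Iterating through the finite bubble tree yields \eqref{eq:energy-id-balls}; summing over all $x\in S$ and combining with the smooth local convergence $u(t)\to \bar u$ on $\Om\setminus S$ (an open set with compact closure in $\cu$) then gives \eqref{eq:energy_id_compact}.
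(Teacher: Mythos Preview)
Your proposal is correct and follows essentially the same route as the paper: metric control from Lemma \ref{lemma:horiz2} plus the local energy estimate of Lemma \ref{lemma:cut-energy} reduce the problem on compact subsets of $\cu$ to a Struwe/\cite{finwind}-type analysis, and the no-loss-of-energy step is imported from the almost-harmonic-maps literature after rescaling at scale $(T-t_n)^{1/2}$. Two small points of contrast: (i) for part \ref{part-1-prop} the paper does not black-box a parabolic $\ep$-regularity statement but instead proves the key intermediate bound $\sup_{t\in[t_0,T)}\int\vph^2|\pt u|^2\,dv_g<\infty$ by differentiating the equation in time and applying Gronwall (this is where the metric control enters quantitatively), and then obtains genuine convergence directly from space--time H\"older continuity rather than by upgrading subconvergence via the energy estimate; (ii) for the energy identity in part \ref{part-2-prop} the paper cites \cite{DT} rather than the dyadic-neck/Pohozaev argument of \cite{finwind}, but the content is the same.
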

In the setting of Theorem \ref{thm:basic_convergence}, i.e. in case that $\inj_{g(t)}(M)\to 0$ as $t\upto T$,  part \ref{part-1-prop} of the proposition 
 yields the  convergence of the maps $u(t)$ on $\cu$ respectively on $\cu \setminus S$ claimed in part \ref{1.1:2} of Theorem  \ref{thm:basic_convergence}. As the resulting limiting maps can be extended across the punctures to $H^1$ maps from $M_i$ (since their energy is bounded) and as the properties of the metric component claimed in part \ref{1.1:1} of Theorem \ref{thm:basic_convergence} have already been proven in Section \ref{sect:metric}, 
this then completes the proof of Theorem \ref{thm:basic_convergence},
modulo the proof of Proposition \ref{prop:conv-nondeg}.  

The second part of Proposition \ref{prop:conv-nondeg} implies part \ref{part1} of Theorem \ref{thm:bubbles}: The first part of \eqref{eq:energy-thick} follows from \eqref{eq:energy_id_compact} since $\de\thick(\cu,h)$ is compact for every $\de>0$, 
while the second part of \eqref{eq:energy-thick} is due to the conformal invariance of the energy.

For the proof of Proposition \ref{prop:conv-nondeg} we shall use the following standard 
 $\eps$-regularity result.

%
\begin{prop} 
\label{prop:eps-reg}
 There exist constants $\eps_0>0$ and $C\in \R$ depending only on the target manifold so that the following holds true.
 Let $u:B_{g_H}(x,r)\to N$ be any smooth map from a ball of radius $r\in (0,1]$ in the hyperbolic plane $(H,g_{H})$ with energy
 $$E(u,g_H, B_{g_H}(x,r))\leq\eps_0.$$
 Then 
 \beq 
\label{est:H^2-estimate}
\int\vph^2\big[\abs{\na_{g_H} du}_{g_H}^2+\abs{du}_{g_H}^4 \big]\,dv_{g_H}\leq C\norm{d\vph}_{L^\infty(H,g_H)}^2E(u,g_H, B_{g_H}(x,r))+C\int\vph^2 \abs{\tau_{g_H}(u)}^2 dv_{g_H}
\eeq
holds true for every function $\vph\in C_c^\infty(B_{g_H}(x,r),[0,1])$.
\end{prop}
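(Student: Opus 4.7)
The plan is a classical Struwe-type $\varepsilon$-regularity argument, combining the embedding form of the equation with a 2D Sobolev inequality whose `small constant' is supplied by the energy smallness hypothesis. Using the isometric embedding $N\hookrightarrow \R^K$ and its second fundamental form $A$, the equation \eqref{eq:evol-u} gives the elliptic identity
\[
\Delta_{g_H} u = \tau_{g_H}(u) - A(u)(du,du),
\]
and since $|A(u)(du,du)| \leq C_N |du|^2$ by compactness of $N$, the pointwise bound $|\Delta_{g_H} u|^2 \leq 2|\tau_{g_H}(u)|^2 + C_N|du|^4$.

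First I would pass from $L^2$ control on $\Delta u$ to $L^2$ control on $\nabla du$ via integration by parts on $(H,g_H)$: because the hyperbolic plane has Gauss curvature $-1$, the Bochner/Weitzenböck commutator is lower order, and one obtains
\[
\int \varphi^2 |\nabla_{g_H} du|^2 \,dv_{g_H} \leq \int \varphi^2 |\Delta_{g_H} u|^2 \,dv_{g_H} + C\int |d\varphi|^2 |du|^2 \,dv_{g_H} + C\int \varphi^2 |du|^2 \,dv_{g_H}
\]
with universal $C$. Substituting the pointwise bound on $|\Delta u|^2$ gives
\[
\int \varphi^2 |\nabla du|^2 \leq 2\int \varphi^2|\tau_{g_H}(u)|^2 + C_N \int \varphi^2 |du|^4 + C\|d\varphi\|_{L^\infty}^2\, E + CE,
\]
so only the quartic term is genuinely nonlinear and requires the smallness of $E$.

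The quartic term is controlled by the 2D Ladyzhenskaya/Gagliardo--Nirenberg inequality $\|f\|_{L^4}^4 \leq C\|f\|_{L^2}^2 \|\nabla f\|_{L^2}^2$, valid for compactly supported $f$ on a ball $B_{g_H}(x,r)$ with $r\leq 1$ because on such a ball the hyperbolic metric is universally comparable to the Euclidean one. Applying this to the scalar $f := \varphi |du|$, using Kato's inequality $|d|du|| \leq |\nabla du|$, and using $\|f\|_{L^2}^2 = \int \varphi^2 |du|^2 \leq 2E \leq 2\varepsilon_0$, yields
\[
\int \varphi^2 |du|^4 \leq C\varepsilon_0 \Big(\int \varphi^2 |\nabla du|^2 + \int |d\varphi|^2 |du|^2 \Big).
\]
Choosing $\varepsilon_0$ depending only on $N$ so that $C_N C \varepsilon_0 \leq \tfrac{1}{2}$ lets me absorb the $\int \varphi^2 |\nabla du|^2$ contribution into the left-hand side of the previous display; feeding the resulting bound back into the quartic inequality then controls $\int \varphi^2 |du|^4$ by the same right-hand side, so both pieces of \eqref{est:H^2-estimate} are bounded simultaneously.

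The only bookkeeping point is that the residual $CE$ term must be absorbed into $C\|d\varphi\|_{L^\infty}^2 E$: this is automatic because $\varphi$ is supported in a ball of radius $r\leq 1$, so any nontrivially oscillating $\varphi\in C_c^\infty(B_{g_H}(x,r),[0,1])$ satisfies $\|d\varphi\|_{L^\infty} \gtrsim 1/r \geq 1$ (otherwise the conclusion is trivial). The main technical obstacle, as in all $\varepsilon$-regularity arguments of this type, is ensuring that the Sobolev and Bochner constants depend only on $N$ and on universal hyperbolic geometry -- and not on the centre $x$ or the radius $r$ -- which is guaranteed by the homogeneity of $(H,g_H)$ combined with the restriction $r\leq 1$.
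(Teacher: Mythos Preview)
Your argument is correct and follows essentially the same route the paper sketches: control $\int\varphi^2|\nabla_{g_H}du|^2$ by $\int\varphi^2|\Delta_{g_H}u|^2$ plus lower-order terms via integration by parts (the commutator being harmless since the domain curvature is $-1$), rewrite $\Delta_{g_H}u$ via the extrinsic equation to produce the quartic term, and then close with a 2D Sobolev inequality whose small constant comes from the energy bound. The paper does not spell out the details but points to \cite[Proposition 2.4]{melanie_uniqueness} for the same computation; your handling of the residual $CE$ term via $\|d\varphi\|_{L^\infty}\gtrsim 1$ on balls of radius $r\le 1$ is a clean way to package the constants.
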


Note that the Hessian term $\abs{\na_{g_H} du}_{g_H}^2$ is not referring to the intrinsic Hessian. That term is instead the sum of the corresponding terms for each component of $u$ viewed as a map into Euclidean space, and depends on the isometric embedding of $N$ that we chose.
This term can be controlled in terms of the integral of $\vph^2|\lap_g u|^2$ and lower order terms simply using integration by parts. This leading order term can be rewritten using 
\eqref{eq:evol-u} and the resulting quartic term in $du$ controlled with the Sobolev inequality.
The details of a very similar argument can be found in \cite[Proposition 2.4]{melanie_uniqueness}.

\begin{proof}[Proof of Proposition \ref{prop:conv-nondeg}]
Part \ref{part-1-prop} of the proposition represents the analogue of Lemma 3.10' of \cite{Struwe85}  and we shall use properties of horizontal curves from Lemma \ref{lemma:horiz2} to 
 control the evolution of the metric, see also \cite{Rexistence} for a related proof in the non-degenerate case.

In the following we shall use several times that
for any compact subset $\Om \subset \cu=M\setminus F$ there exists a number 
$t_0=t_0(\Om)\in [0,T)$ so that 
\beq \label{ass:compact} \Om\subset \de_{2\bar K}(t_0)\thick (M,g(t_0)) \eeq
  where $\de_{K}(t)=K(T-t)(E(t)-E(T))$ and $\bar K$ are as in Lemma \ref{lemma:horiz2}.
  Indeed, for solutions of \eqref{flow} which degenerate as described in  \eqref{ass:degen}, this is a consequence of 
the uniform convergence of the injectivity radius obtained in Proposition \ref{prop:collar_desc}, while otherwise $\inj_{g(t)}(M)$ is bounded away from zero uniformly so \eqref{ass:compact} is trivially satisfied for $t_0$ sufficiently close to $T$. 
 As a consequence of \eqref{ass:compact} also 
  \beq \label{est:inj-Om}
\inj_{g(t_0)}(x)\geq \de_{\bar K}(t_0) \text { for all } x\in M  \text{ with } \dist_{g(t_0)}(x,\Om)\leq \de_{\bar K}(t_0),
\eeq 
which allows us to apply Lemma \ref{lemma:horiz2} to control the evolution of the metric as well as Lemma \ref{lemma:cut-energy}  to bound the cut-energy on this neighbourhood of $\Om$.

We first apply this idea to prove that 
for any point $p\in \cu$ for which 
\beq \label{def:tilde-S}
\limsup_{t\upto T} E(u(t),g(t),V)\geq \ep_0 \text{ for every neighbourhood }
V\subset M \text{ of } p,\eeq 
$\eps_0>0$ the constant obtained in Proposition \ref{prop:eps-reg}, 
we also  have
\beq 
\label{est:tildeS}\liminf_{t\upto T} E(u(t),g(t),W)\geq \ep_0
\quad\text{ for every neighbourhood }
W\subset M \text{ of } p.\eeq 
 In particular, the set $\tilde S$ of points in $\cu$ for which \eqref{def:tilde-S} holds is a finite set and we will later see that it agrees with the singular set $S$ defined in \eqref{def:S}. 
  
To show \eqref{est:tildeS} for a given $p\in \tilde S$ we let $t_0\in [0,T)$ be large enough so that \eqref{ass:compact} holds for $\Om=\{p\}$. Given any 
 neighbourhood $W$ of $p$ we then choose $r\in (0,\de_{\bar K}(t_0))$ small enough so that $B_{g(t_0)}(p,r)\subset W$ 
 and select a cut-off function $\vph\in C_c^\infty(B_{g(t_0)}(p,r),[0,1])$ with $\vph\equiv 1$ in a neighbourhood $V$ of $p$.

Lemma \ref{lemma:cut-energy} implies that the 
limit $\lim_{t\upto T}E_\vph(t)$ of the cut-off energy defined in \eqref{def:cut-energy} exists and thus that, by \eqref{def:tilde-S}, 
$$\liminf_{t\upto T} E(u(t),g(t),W)\geq \lim_{t\upto T}E_\vph(t)\geq \limsup_{t\upto T} E(u(t),g(t),V)\geq \eps_0$$ as claimed. 
Having thus established that there is only a finite subset $\tilde S$ of points in $\cu$ for which \eqref{def:tilde-S} holds, we now want to prove that $u(t)$ converges smoothly 
on every compact subset $V$ of $\cu \setminus \tilde S$ as $t\upto T$. 

Given such a compact subset $V$ of $\cu \setminus \tilde S$ 
we may choose  $r_0\in (0,1)$ small enough that 
\beq \label{est:small_energy} E(u(t),g(t),B_{g(t_0)}(p,r_0))<\eps_0 \quad\text{ for all } t\in [0,T),\text{ and all } p\in V.\eeq

Then choosing  $t_0\in[0,T)$ so that \eqref{ass:compact} holds true for $\Om=V$ and reducing $r_0$ if necessary to ensure that $r_0<\de_{\bar K}(t_0)$ we know from \eqref{est:inj-Om} that we can apply 
both Lemmas \ref{lemma:horiz2} and \ref{lemma:cut-energy} on 
balls $B_{g(t_0)}(p,r)$, $r\leq r_0$, $p\in V$, as they are contained in $\de_{\bar K}(t_0)\thick (M,g(t_0))$.

We first note that \eqref{est:equiv-metric} from Lemma \ref{lemma:horiz2} guarantees that for every $t\in [t_0,T)$
\beq \label{est:inclu} 
B_{g(t_0)}(p,\tfrac{r_0}{C_1})\subset 
B_{g(t)}(p,\tfrac{r_0}{\sqrt{C_1}})
\subset B_{g(t_0)}(p,r_0).\eeq 
We furthermore note that $\tfrac{r_0}{\sqrt{C_1}}$ cannot be larger than $\inj_{g(t)}(p)$ for any $t\in [t_0,T)$ as 
otherwise $B_{g(t)}(p,\tfrac{r_0}{\sqrt{C_1}})$, and thus also $ B_{g(t_0)}(p,r_0)$, would need to contain a curve $\si$ starting and ending in $p$ that is not contractible in $M$, which would contradict the fact that $r_0<\inj_{g(t_0)}(p)$.

Hence $B_{g(t)}(p,\tfrac{r_0}{\sqrt{C_1}})$ is isometric to a ball in the hyperbolic plane and so the smallness of the energy  $E(u(t),g(t),B_{g(t)}(p,\tfrac{r_0}{\sqrt{C_1}}))<\eps_0$ 
obtained from \eqref{est:small_energy} and \eqref{est:inclu} allows us to apply Proposition \ref{prop:eps-reg}
for any $\vph\in  C_c^\infty(B_{g(t_0)}(p,\tfrac{r_0}{C_1}),[0,1])$
and any time $t\in [t_0,T)$. This will be crucial in the proof of the following:

\textbf{Claim:}  For any $p\in V$ and $\vph\in C_c^\infty(B_{g(t_0)}(p,\tfrac{r_0}{C_1}),[0,1])$ (with $r_0>0$ chosen as above) we have 
\beq \label{est:goal} \sup_{t\in [t_0,T)}\int \vph^2\abs{\pt u}^2 dv_g<\infty.
\eeq
In particular there exists a 
 neighbourhood $W$ of $V$
so that
  $$\sup_{t\in [t_0,T)}\norm{ u(t)}_{H^2 (W,g(t))}<\infty.$$ 

\textit{Proof of claim:}
To prove the first part of the claim, we differentiate \eqref{eq:evol-u} in time, test with $\vph^2\pt u$ and use that $\ddt dv_g=0$ to write
 \beqa 
 &\tfrac12\tfrac{d}{dt} \int  \vph^2\abs{\pt u}^2\,dv_g+\int \vph^2\abs{d\pt u}^2 \, dv_g\\
 &\qquad =-\int\lan d\pt u,d(\vph^2)\ran_g \cdot \pt u \,dv_g+\tfrac{d}{d\eps}\vert_{\eps=0}\int\Delta_{g(t+\eps)}u\cdot \vph^2 \pt u  \,dv_{g(t+\eps)}\\
 &\qquad \qquad+\int\pt(A_g(u)(du,du))\cdot \vph^2 \pt u\,dv_g\\
 &\qquad\leq \tfrac18\int \vph^2\abs{d\pt u}^2 \, dv_g+C\norm{d\vph}_{L^\infty(M,g)}^2\cdot 
 \norm{\pt u}_{L^2(M,g) }^2
  -\tfrac{d}{d\eps}\vert_{\eps=0}\int \lan du,d(\vph^2\pt u)\ran_{g(t+\eps)} dv_g\\
  &\qquad\qquad +C\norm{\pt g}_{L^\infty(\supp(\vph),g)}^2 E(u,g) +C\int\abs{\pt u}^2 \abs{du}^2_g\vph^2 dv_g\\
 &\qquad\leq \tfrac14 \int  \vph^2\abs{d\pt u}^2dv_g
 + C\norm{d\vph}_{L^\infty(M,g)}^2\norm{\pt u}_{L^2(M,g) }^2+C\norm{\pt g}_{L^\infty(\supp(\vph),g)}^2\\
 &\qquad \qquad  +\hat C\int \abs{\pt u}^2 \abs{du}_g^2\vph^2 dv_g, \label{est:L2-time-der-1}
 \eeqa
  where $C$ and $\hat C$ depend only on a bound $E_0$ on the initial energy and  the target manifold, and the value of $\hat C$ is fixed in what follows.

To estimate the last term in \eqref{est:L2-time-der-1}
 we first apply Proposition \ref{prop:eps-reg} to get 
$$\int\vph^2\abs{\pt u}^2\abs{du}_g^2 dv_g
\leq C
(\int\vph^2\abs{\pt u}^4 dv_g)^{1/2}\cdot \big[ \int\vph^2\abs{\pt u}^2 dv_g+C\norm{d\vph}_{L^\infty(M,g)}^2 \big]^{1/2}.$$
We then recall that $\supp(\vph)$ is contained in the ball $B_{g(t)}(p,\frac{r_0}{\sqrt{C_1}})$ for every $t\in[t_0,T)$ and that $\frac{r_0}{\sqrt{C_1}}\leq \min(\inj_{g(t)}(p),1)$. We may thus view $(\supp(\vph),g(t))$ as a subset of the unit ball in the hyperbolic plane and apply the Sobolev embedding theorem  
to estimate the first factor in the above inequality by
\beqa 
(\int\vph^2\abs{\pt u}^4 dv_g)^{1/2}
&=\norm{\vph \abs{\pt u}^2}_{L^2} 
\leq C\norm{d(\vph  \abs{\pt u}^2)}_{L^1} \\
&\leq C\norm{\pt u}_{L^2(M,g)}\big(\int\abs{d\pt u}^2\vph^2 dv_g\big)^{1/2}
+C\norm{d\vph}_{L^\infty(M,g)}\norm{\pt u}_{L^2(M,g)}^2.
\eeqa 
Combined this allows us to estimate the final term in \eqref{est:L2-time-der-1} by
$$\hat C\int \abs{\pt u}^2 \abs{du}^2 \vph^2 dv_g\leq \frac14 \int \abs{d\pt u}^2 \vph^2 dv_g+C\norm{\pt u}_{L^2(M,g)}^2
\big[\int \vph^2\abs{\pt u}^2 dv_g+C\norm{d\vph}_{L^\infty(M,g)}^2\big]$$
and thus to reduce \eqref{est:L2-time-der-1} to  
 \beqa 
\frac{d}{dt}\int  \vph^2\abs{\pt u}^2dv_g +\int\vph^2\abs{d\pt u}^2 dv_g&\leq 
C\norm{\pt u}_{L^2(M,g) }^2\cdot \big[ \norm{d\vph}_{L^\infty(M,g)}^2+\int \vph^2 \abs{\pt u}^2dv_g\big] \\
& \quad+C \norm{\pt g}_{L^\infty(\supp(\vph),g)}^2.
\eeqa

Since $\pt g$ is controlled on $\supp(\vph)\subset \de_{\bar K}(t_0)\thick(M,g(t_0))$ by the estimate \eqref{est:bamberg1} of Lemma \ref{lemma:horiz2} while estimate \eqref{est:equiv-metric} from the same lemma implies that
$\norm{d\vph}_{L^\infty(M,g(t))}\leq \sqrt{C_1} \norm{d\vph}_{L^\infty(M,g(t_0))}$, we thus conclude that 
\beqa 
\frac{d}{dt}\int  \vph^2\abs{\pt u}^2dv_g
\leq &\,  C\norm{d\vph}_{L^\infty(M,g(t_0))}^2\norm{\pt  u}_{L^2(M,g) }^2+C \norm{\pt u}_{L^2(M,g)}^2 \int \vph^2 \abs{\pt u}^2dv_g\\
&\quad +C\de_{\bar K}(t_0)^{-1} \norm{\pt g}_{L^2(M,g)}^2
\\
\leq & \,C\big(-\tfrac{dE}{dt}\big)\int \vph^2 \abs{\pt u}^2 dv_g+C\big(-\tfrac{dE}{dt}\big) \cdot \big[\norm{d\vph}_{L^\infty(M,g(t_0))}^2+\de_{\bar K}(t_0)^{-1}\big],
\eeqa
by \eqref{energy-identity},
where $C$ now depends also on the genus of $M$ and $\eta$.
Hence \eqref{est:goal} follows using
Gronwall's lemma. The second part of the claim is now an immediate consequence of \eqref{est:goal} and Proposition \ref{prop:eps-reg}.

Based on the claim we have just proven, we can now establish convergence of $u(t)$ in $C^l(V)$ for every $l\in\N$  by well-known arguments: First of all, we may reduce the neighbourhood
$W$ of $V$ if necessary to ensure that $W\subset \de_{\bar K}(t_0)\thick (M,g(t_0))$, compare \eqref{ass:compact} and \eqref{est:inj-Om}. We then apply the Sobolev embedding theorem to obtain that
 $$\sup_{t\in [t_0,T)}\norm{ du(t)}_{L^p (W,g(t))}<\infty \text{ for every } 1\leq p<\infty.$$
The control on the metrics $g(t)$, $t\in [t_0,T)$, obtained in Lemma \ref{lemma:horiz2} thus allows us  to view \eqref{eq:evol-u} as a uniformly parabolic equation on the fixed surface $(W,g(t_0))$ (for times $t$ in this interval $[t_0,T)$)
  whose right-hand side is 
 in $L^p$ for every $p<\infty$. Standard parabolic theory combined with the fact that $u$ is by assumption smooth away from $T$, implies that $u$ is in the parabolic Sobolev space $W^{2,1;p}(\tilde W\times [t_0,T))$ 
 for every $p<\infty$ for a slightly smaller neighbourhood $\tilde W$ of $V$. 
 In particular $u$ is H\"older continuous with exponent $\alpha$ for every $\alpha<1$ on $\tilde W \times [0,T)$.
 
Taking covariant derivatives $\na_{g(t)}^l$ of \eqref{eq:evol-u} allows us to repeat the above argument and obtain that $(x,t)\mapsto (\na_{g(t)}^l u)(x,t)$ is H\"older continuous on $V\times [t_0,T)$ for every $l\in \N$. As the metrics converge smoothly to $h$ on $V$, this allows us to conclude that also
 $u(t)\to \bar u$ in $C^l(V,h)$ 
 for every $l\in\N$, for some
 $\bar u$.

As the obtained convergence implies in particular that the set $S$ defined in \eqref{def:S}, as used in Proposition \ref{prop:conv-nondeg}, agrees with the set $\tilde S$ of points satisfying \eqref{def:tilde-S} considered here, this completes the proof of  part \ref{part-1-prop} of Proposition \ref{prop:conv-nondeg}.

For the proof of part \ref{part-2-prop} of the proposition we closely follow the arguments of \cite[Section 2]{finwind}.

Let $p\in S$. As above we choose $t_0<T$ so that \eqref{ass:compact}
 holds true for $\Om=\{p\}$ which we recall allows us to 
 apply Lemmas \ref{lemma:horiz2} and \ref{lemma:cut-energy} on balls $B_{g(t_0)}(p,r_0)$, 
 $r_0\in (0,\de_{\bar K}(t_0))$ since \eqref{est:inj-Om} ensures that such balls are contained in $\de_{\bar K}(t_0)\thick(M,g(t_0))$. We fix such a radius $r_0$ which is small enough so that $B_{g(t_0)}(p,r_0)$ contains no other element of the singular set $S$. 

Given any fixed cut-off function $\psi\in C_c^\infty([0,1),[0,1])$
with $\psi\equiv 1$ on $[0, \frac12]$ and with $\norm{\psi'}_{L^\infty}\leq 4$
we set
$$\vph_{r}(x):=\psi\big(\tfrac{\dist_{g(t_0)}(p,x)^2}{r^2}\big), \quad 0<r<r_0$$
and note that $\norm{d\vph_r}_{L^\infty(M,g(t_0))}\leq \frac{C}{r}$. As  $\supp(\vph_r)\subset B_{g(t_0)}(p,r_0)$ we can apply  Lemma \ref{lemma:cut-energy} to control the associated
cut-off energies 
$E_r(t):=E_{\vph_r}(t)$ defined in \eqref{def:cut-energy} and obtain in particular
that  
$\lim_{t\upto T} E_r(t)$ exists for every $r\in (0,r_0)$. 
Combined with 
the local $C^l$ convergence of $u(t)\to \bar u$ on  $\cu\setminus S$ and the convergence of the metrics obtained in part \ref{1.1:1} of Theorem \ref{thm:basic_convergence} this 
implies that  
\beq \label{est:Ephat}\hat E_p:=\lim_{t\upto T}  E_r(t)-\half \int\vph_r^2\abs{d\bar u}_h^2 dv_h\eeq
is independent of $r\in (0,r_0)$.

Let now  $\nu>0$.  For $t\in[t_0,T)$ sufficiently  close to $T$ so that $\nu(T-t)^\half<r_0$ we can apply Lemma \ref{lemma:cut-energy} to $s\mapsto E_{\nu(T-t)^{1/2}}(s), s\in [t_0,T)$, 
in order to obtain the second inequality of  
\beqa  \label{est:E_nu}
\big|E_{\nu (T-t)^{1/2}}(t)-\hat E_p\big| & \leq
\big|E_{\nu (T-t)^{1/2}}(t)-\lim_{s\upto T}E_{\nu (T-t)^{1/2}}(s)\big|+
\half \int \vph_{\nu(T-t)^{1/2}}^2\abs{d\bar u}_h^2 dv_h 
\\
 &\leq E(t)-E(T)+C[\nu^{-1}+\de_{\bar K}^{-\half}(t_0)\cdot (T-t)^\half]\cdot(E(t)-E(T))^\half \\
&\qquad + E(\bar u, h, B_{g(t_0)}(p,\nu(T-t)^{1/2})).
\eeqa
We furthermore note that $B_{g(t_0)}(p,\nu(T-t)^{1/2}))\subset B_{h}(p,\sqrt{C_1}\nu(T-t)^{1/2})$, compare \eqref{est:equiv-metric} of Lemma \ref{lemma:horiz2}, and thus that the last term in \eqref{est:E_nu} tends to zero as $t\upto T$.
Passing to the limit $t\upto T$ in \eqref{est:E_nu} we thus obtain that 
 also 
$$\lim_{t\upto T}E_{\nu (T-t)^{1/2}}(t)=\hat E_p\text{ for every }\nu>0.$$
Combined with the equivalence \eqref{est:equiv-metric} of the metrics obtained in Lemma \ref{lemma:horiz2} we therefore get that for any $\nu>0$
\beqa 
\lim_{r\downarrow 0}\lim_{t\upto T} E(u(t),g(t), B_{g(t)}(p,r))\leq &\,
\lim_{r\downarrow 0}\lim_{t\upto T} E(u(t),g(t), B_{g(t_0)}(p,\sqrt{C_1}r))\leq
\lim_{r\downarrow 0}\lim_{t\upto T} E_{2\sqrt{C_1}r}(t)
\\
=&\,
\label{est:scale-bubbles} 
\hat E_p=
\lim_{t\upto T}E_{\nu C_1^{-1/2} (T-t)^{1/2}}(t)\\
\leq &\,\liminf_{t\upto T} E(u(t),g(t), B_{g(t_0)}(p,\nu C_1^{-1/2}(T-t)^{1/2}))\\
\leq &\,\liminf_{t\upto T} E(u(t),g(t), B_{g(t)}(p,\nu (T-t)^{1/2})).
\eeqa
As the `reverse' inequality 
$$\limsup_{t\upto T} E(u(t),g(t), B_{g(t)}(p,\nu (T-t)^{1/2}))\leq
\lim_{r\downarrow 0}\lim_{t\upto T} E(u(t),g(t), B_{g(t)}(p,r))$$
is trivially true, this proves 
the second equality in \eqref{eq:energy-id-balls}, 
including the existence of the limits taken,
while the first inequality of \eqref{eq:energy-id-balls} follows directly from the equivalence \eqref{est:equiv-metric} of the metrics $g(t)$ and $h$ obtained in Lemma \ref{lemma:horiz2}.

To establish the final inequality of \eqref{eq:energy-id-balls} we closely follow \cite[Section 2]{finwind}.
Given a sequence of times $t_n\upto T$ as in \eqref{tau_control}
and a point $p\in S$ we pick local isothermal coordinates centred at $p$ for each of the $g(t_n)$ by identifying $B_{g(t_n)}(p,r_0)$ with the corresponding ball centred at zero of the Poincar\'e hyperbolic disc, viewed conformally as the unit disc centred at the origin in $\R^2$,
and 
rescale to obtain a sequence of maps 
$$u_n(x):=u(r_n x,t_n), \quad r_n:=(T-t_n)^{1/2} $$ 
for which $\norm{\tau(u_n)}_{L^2(\ck)}\to 0$ for every $\ck\subset\subset \R^2$.

Since \eqref{est:scale-bubbles} implies that 
$E(u_n, B(0,\Lambda)\setminus B(0,\lambda))\to 0$ for any 
$0<\la<\La$, a subsequence of 
the maps $u_n$ converges strongly in $H^1$ away from $0$ to a constant map while 
 bubbles $\{\om_j\}_{j=1}^{m'}$ develop near the origin at scales $\hat \la_n^j\to 0, n\to \infty$.

The scales at which the bubbles $\om_j$ develop in the original sequence are thus $\la_n^j=r_n\hat \la_n^j=o((T-t_n)^{1/2})$ and the `no-loss-of-energy' result for bubble tree convergence of almost harmonic maps of \cite{DT}
ensures that all the energy of the $u_n$ is captured by these bubbles i.e. that for every $\La>0$ we have
$$\lim_{n\to\infty} E(u_n,B_\La(0))=\sum_{l=1}^{m'} E(\om_l).$$
Taking the limit $\La\downto 0$, and bearing in mind that 
all but the final equality of \eqref{eq:energy-id-balls} has already been established, we find that 
for every $p\in S$, we have
\beqa 
\lim_{r\downarrow 0}\lim_{t\upto T} E(u(t),g(t), B_{g(t)}(p,r))
=  \sum_{l=1}^{m'} E(\om_l),
\eeqa
completing the proof of \eqref{eq:energy-id-balls}.

Finally, given any compact subset $\Om\subset\subset \cu$ which is large enough for $S$ to be contained in the interior of $\Om$ 
we can combine \eqref{eq:energy-id-balls} with the strong 
 $H^1_{loc}$ convergence of $u(t)\to \bar u$ on $\cu\setminus S$ and the convergence of the metrics to obtain that indeed
\beq
\label{partial_limit}
\lim_{t\upto T} E(u(t),g(t),\Om)= E(\bar u, h, \Om) + \sum_{l=1}^{m''} E(\om_l)
\eeq
where $\{\om_l\}_{l=1}^{m''}$ is the set of all bubbles developing at points in $S$ along a sequence of times $t_n$ as considered in the proposition. 
\end{proof}

\subsection{All energy lost down collars is represented by bubbles}

\label{sect:deg}

At this point we have a good description of the convergence of $u(t)$ and $g(t)$ locally on $\cu=M\setminus  F$, with Proposition \ref{prop:conv-nondeg} completing the proof of Theorem \ref{thm:basic_convergence} and establishing part \ref{part1} of Theorem \ref{thm:bubbles}.
In this section we prove parts \ref{part:energy_limit} and \ref{part_neck} of 
Theorem \ref{thm:bubbles}, which show that near the centre of degenerating collars, the map is looking like a collection of bubbles, while on larger scales that are nevertheless vanishing scales, 
where we have no way of showing that the map is becoming harmonic, no energy can be lost.

\begin{proof}[Proof of part \ref{part:energy_limit} of Theorem \ref{thm:bubbles}]
As a next step we now prove part \ref{part:energy_limit} of Theorem \ref{thm:bubbles}
which can be seen as quantifying the size of the part of 
$\cu$
on which the energy has almost reached its limit. As we can only apply the local energy estimate from Lemma \ref{lemma:cut-energy} on regions with sufficiently large injectivity radius, we will obtain the existence of a limit of the energy on the $[T-t]\thin$ part by proving that the limit on the $[T-t]\thick$ part exists and 
agrees with $E_{thick}$ and then appealing to the existence of a limit of the total energy $E(t)$.

As above
it will be more convenient to work not with energies over given sets, but with cut-off energies $E_\vph$ as defined in \eqref{def:cut-energy}. 
To this end we let $\de_K(t)=K(T-t)(E(t)-E(T))$, $K\geq \bar K$, be as in Lemma \ref{lemma:horiz2} and recall that the characterisation of the pinching set \eqref{eq:char-pinching} implies in particular that for every $t_0\in[0,T)$ 
$$\inj_{g(t_0)}(M)< \de_K(t_0)$$
and thus that 
$$A_{K,t_0}:=\{x\in M\ :\ \inj_{g(t_0)}(x)\leq\de_{K}(t_0)\}$$
is nonempty.
%
We will always assume that $t_0\in [0,T)$ is sufficiently large, depending in particular on $K$, so that $\de_K(t_0)\cdot (\pi e)<\arsinh(1)$. In this way, not only can we be sure that every point in $A_{K,t_0}$ has injectivity radius less than $\arsinh(1)$, and is thus lying within some collar region around a geodesic of length less than $2\arsinh(1)$, 
we can also be sure that the $1$-fattening of 
$A_{K,t_0}$, i.e. $\{p\in M\ |\ \dist_{g(t_0)}(p,A_{K,t_0})<1\}$, must 
lie within $\de_{e\pi K}(t_0)\thin(M,g(t_0))$, and hence 
also lie within a union of such (pairwise disjoint) collars, since by
\cite[Lemma A.3]{RT-horizontal}  if $x\in A_{K,t_0}$ and $y\in B_{g(t_0)}(x,1)$ lies in the same collar, then $\inj_{g(t_0)}(y)\leq \inj_{g(t_0)}(x) \cdot (\pi e)
\leq \de_K(t_0)\cdot (\pi e)<\arsinh(1)$, so we cannot escape this collar within a distance $1$ of $x$. In particular, the function $x\mapsto \dist_{g(t_0)}(x,A_{K,t_0})$ is smooth on the 
$1$-fattening of $A_{K,t_0}$.


Given any 
smooth cut-off function $\phi:\R\to [0,1]$ such that $\phi(x)=0$ for $x\leq 0$, $\phi(x)=1$ for $x\geq 1$ and $|\phi'|\leq 2$,
we can thus define the induced \emph{smooth} cut-off $\vph_{K,t_0}:M\to [0,1]$ by 
\beq 
\vph_{K,t_0}(x):=
\phi(\dist_{g(t_0)}(x,A_{K,t_0})).
\eeq
It is immediately apparent that
\beq
\label{vph0}
\vph_{K,t_0}\equiv 0\qquad\text{on }\de_K(t_0)\thin(M,g(t_0)),
\eeq
and  that the support of $\vph_{K,t_0}$ lies within $\de_K(t_0)\thick(M,g(t_0))$ and hence
$\vph_{K,t_0}$ has compact support within $\cu$ owing to \eqref{Uchar}.
This will shortly allow us to apply Lemma \ref{lemma:cut-energy} to the 
corresponding local energy 
$E_{K,t_0}(t):=E_{\vph_{K,t_0}}(t)$ that serves as a substitute for the energy of $u(t)$ over $\de_K(t_0)\thick(M,g(t_0))$.

We also claim that
\beq
\label{vph1}
\vph_{K,t_0}\equiv 1\qquad\text{on }\de_{e\pi K}(t_0)\thick(M,g(t_0)).
\eeq
Indeed, the only way this could fail would be if we could find a point in the $1$-fattening of
$A_{K,t_0}$ that lies in $\de_{e\pi K}(t_0)\thick(M,g(t_0))$, which we ruled out above.

By \eqref{vph0}, we see that 
$E_{K,t_0}(t)\leq E\big(u(t),g(t),\de_K(t_0)\thick (M,g(t_0))\big)$, and so 
\beq \label{est:triv_dir_EKt}
\lim_{K\to \infty}\limsup_{t\upto T}E_{K,t}(t)\leq \lim_{K\to \infty}\limsup_{t\upto T} E(u(t),g(t),\de_K(t)\thick(M,g(t))).
\eeq
On the other hand, 
by \eqref{vph1}, we see that
$E(u(t),g(t),\de_{e\pi K}(t_0)\thick(M,g(t_0)))\leq E_{K,t_0}(t)$, 
and hence we have the converse inequality
\beq 
\label{est:nontriv_dir_EKT}
\lim_{K\to \infty}\limsup_{t\upto T} E(u(t),g(t),\de_{K}(t)\thick (M,g(t))\leq \lim_{K\to \infty}\limsup_{t\upto T} E_{K,t}(t),
\eeq
i.e. we have equality in \eqref{est:triv_dir_EKt} and \eqref{est:nontriv_dir_EKT}.
Therefore to prove \eqref{en_id_thin2}, 
it suffices to show that 
\beq \label{old-destinationE}
E_{thick}=\lim_{K\to \infty} \limsup_{t\upto T}E_{K,t}(t).\eeq
We claim first that
\beq
\label{Ethickalt}
E_{thick}=\limsup_{t_0\upto T}\lim_{t\upto T} E_{K, t_0}(t)
\eeq
where the existence of $\lim_{t\upto T} E_{K, t_0}(t)$ is guaranteed by Lemma \ref{lemma:cut-energy}.
To see \eqref{Ethickalt}, first recall that for $K$, $t_0$ as above, the support of 
$\vph_{K,t_0}$ is compact within $\cu$, and is thus contained within 
$\de\thick(\cu,h)$ for sufficiently small $\de>0$. 
By reducing $\de$ further, we may assume that all bubble points in $S$ lie within the interior of $\de\thick(\cu,h)$.
Therefore we have
$E(u(t),g(t),\de\thick(\cu,h))\geq E_{K,t_0}(t)$, and taking the limits $t\upto T$,
$\de\downto 0$ and $t_0\upto T$ in that order, we find that
$E_{thick}\geq\limsup_{t_0\upto T}\lim_{t\upto T} E_{K, t_0}(t)$.
To see the converse inequality, we observe that by \eqref{vph1}, for any $\de>0$ and $t_0<T$ sufficiently large (depending on $\de$, $K$ etc.) we have 
$\vph_{K,t_0}\equiv 1$ on $\de\thick(M,g(t_0))$, and so 
$E(u(t),g(t),\de\thick(\cu,h))\leq E_{K,t_0}(t)$. This time we take limits in the order
$t\upto T$, $t_0\upto T$ and then $\de\downto 0$ to give
$E_{thick}\leq\limsup_{t_0\upto T}\lim_{t\upto T} E_{K, t_0}(t)$, and hence
\eqref{Ethickalt}.

Thus 
\eqref{en_id_thin2}
would follow if we can prove that
as $K\to \infty$ we have
\beq \label{new-destinationE}
\limsup_{t_0\upto T}\abs{ E_{K,t_0}(t_0)-\lim_{t\upto T}E_{K,t_0}(t)}\to 0.\eeq
But this follows from Lemma \ref{lemma:cut-energy}, which implies  that
for  $t_0\in [0,T)$ as large as considered above, and every $t\in [t_0,T)$, we have
\beq \abs{E_{K,t_0}(t)-E_{K,t_0}(t_0)}\leq  E(t_0)-E(T)+
\frac{C}{K^\half}+C(T-t_0)^{\frac12}(E(t_0)-E(T))^{\half}\label{est:EK-diff}\eeq
with $C$ depending only on the genus of $M$, $\eta$ and an upper bound on the initial energy, which thus yields \eqref{new-destinationE} after taking the limits $t\upto T$, $t_0\upto T$
and $K\to\infty$, in that order. 

Now that \eqref{en_id_thin2} has been proved, we verify that \eqref{en_id_thin1} follows as a result. In particular, we verify that the limit taken in \eqref{en_id_thin1} exists.
However large we take $K>0$, for sufficiently large $t<T$ we have
$T-t\geq \de_K(t)$, and hence
$$E(u(t),g(t),[T-t]\thin(M,g(t)))\geq E(u(t),g(t), \de_K(t)\thin(M,g(t))).$$
Taking a $\liminf$ as $t\upto T$ and then the limit $K\to\infty$, and using
\eqref{en_id_thin2} we find that
\beq
\label{onewayround}
\liminf_{t\upto T}E(u(t),g(t),[T-t]\thin(M,g(t)))\geq E_{thin}.
\eeq
To obtain the converse inequality, observe that given any $\de>0$, for sufficiently large
$t<T$ we have
$\de\thin(\cu,h)\supset [T-t]\thin(M,g(t))$, cf. \eqref{conv:inj}, and therefore
$$E(u(t),g(t),\de\thin(\cu,h))\geq E(u(t),g(t),[T-t]\thin(M,g(t))).$$
Provided $\de>0$ is sufficiently small (so that the singular set $S$ is in the interior of $\de\thick(\cu,h)$), we can then take a limit as $t\upto T$,
followed by a limit as $\de\downto 0$, to give
$$E_{thin}\geq \limsup_{t\upto T}E(u(t),g(t),[T-t]\thin(M,g(t))),$$
which when combined with \eqref{onewayround} completes the proof of \eqref{en_id_thin1}
and hence of part \ref{part:energy_limit} of the theorem.
\end{proof}

While part \ref{part:energy_limit} of Theorem \ref{thm:bubbles} gives good control on where energy can concentrate on the degenerating part of the surface, we currently have no control of what parts of the map are lost down the degenerating parts of the collar at the singular time $T$.
This is addressed by part \ref{part_neck}, which we shall now prove.

\begin{proof}[Proof of part \ref{part_neck} of Theorem \ref{thm:bubbles}]
Proposition \ref{prop:collar_desc} tells us that the length $\ell(t_n)$ of the central geodesic of each degenerating collar is controlled like $\ell(t_n)=o(T-t_n)$ and hence that the $[T-t_n]\thin$ part of such a collar, where all of the lost energy lives, is represented by 
longer and longer cylinders 
$\tilde\Col_n:= \Col(t_n, \de_n)=
(-\tilde X_n,\tilde X_n)\times S^1$, $\de_n=T-t_n$,  equipped with the  corresponding collar metrics $g=\rho^2g_0$.

We can indeed consider the maps on the 
larger subcollars $\widehat\Col_n=(-\widehat X_n,\widehat X_n)$  which correspond to the $[T-t_n]^{\half}\thin$ parts of the collar, where we note that  $1\ll \tilde X_n \ll \widehat X_n \ll X(\ell_n)$, compare \eqref{eq:Xj}. 

We recall from \cite[(A.9)]{RT3} 
that $\rho(y)\leq \inj_{g(t)}(y)$ as $y$ varies within each collar. Therefore, throughout $\widehat\Col_n$ we have
$\rho\leq (T-t_n)^\half$. 
By the scaling of the tension field, 
if we switch from the hyperbolic metric $g_n=g(t_n)$ to the flat cylinder metric 
$g_0=ds^2+d\th^2$ on each such subcollar, then we can estimate the tension of $u_n:=u(t_n)$ according to
\beq
\label{est:almost-harmonic}
\|\tau_{g_0}(u_n)\|_{L^2(\widehat\Col_n,g_0)}\leq 
(\sup_{\widehat\Col_n}\rho)
\|\tau_{g_n}(u_n)\|_{L^2(\widehat\Col_n,g_n)}
\leq
(T-t_n)^\half \|\tau_{g_n}(u_n)\|_{L^2(M,g_n)}
\to 0
\eeq
by \eqref{tau_control}.

We can thus view the $u_n$'s as almost-harmonic maps from longer and longer cylinders $(\widehat \Col_n,g_0)$ and apply  
Proposition \ref{prop:bb-convergence}
to pass to a subsequence that converges to a full bubble branch.

It is this estimate \eqref{est:almost-harmonic} and the precise information on the degenerate region where energy can concentrate obtained in part \ref{part:energy_limit}  of Theorem \ref{thm:bubbles}
that allows us to represent the maps on these parts in terms of branched minimal immersions and curves.  
We stress that we would not be able to perform this analysis on the whole collar.

We also remark that in our situation we obtain the additional information that 
 any bubble obtained in the convergence to a full bubble branch described in Proposition \ref{prop:bb-convergence} will be contained 
in the $(T-t_n)\thin$ part of the surface as we already know that no energy can be lost on $\{p: \inj_{g(t)}(p)\in [(T-t),(T-t)^{1/2}]$\}.
\end{proof}

{\sc MR: 
Mathematical Institute, University of Oxford, Oxford, OX2 6GG, UK}\\
{\sc PT: Mathematics Institute, University of Warwick, Coventry,
CV4 7AL, UK}

\end{document}